\newtheorem{definition}{Definition}
\newtheorem{theorem}{Theorem}
\newtheorem{proposition}{Proposition}
\newtheorem{lemma}{Lemma}
\theoremstyle{remark}
\newtheorem*{remark}{Remark}
\DeclareMathOperator{\GL}{GL}
\DeclareMathOperator{\Hom}{Hom}
\DeclareMathOperator{\End}{End}
\DeclareMathOperator{\Sym}{Sym}
	\newcommand{\note}[1]{{\textcolor{red}{$\langle$#1$\rangle$}}} 
	\newcommand{\note}[1]{}
\title{Generalized Equivariant Cohomology and Stratifications}
\author{Peter Crooks}\email{peter.crooks@utoronto.ca}
\author{Tyler Holden}\email{tholden@math.toronto.edu}
\thanks{The first author was supported by NSERC CGS-D3 and OGS scholarships during the preparation of this work.}
\begin{document}
\begin{abstract}
For $T$ a compact torus and $E_T^*$ a generalized $T$-equivariant cohomology theory, we provide a systematic framework for computing $E_T^*$ in the context of equivariantly stratified smooth complex projective varieties. This allows us to explicitly compute $E_T^*(X)$ as an $E_T^*(\text{pt})$-module when $X$ is a direct limit of smooth complex projective $T_{\mathbb{C}}$-varieties with finitely many $T$-fixed points and $E_T^*$ is one of $H_T^*(\cdot;\mathbb{Z})$, $K_T^*$, and $MU_T^*$. We perform this computation on the affine Grassmannian of a complex semisimple group.
\end{abstract}

\maketitle

\section{Introduction}
Generalized equivariant cohomology theories have received considerable attention in the modern research literature. Particular emphasis has been placed on cohomology computations in the presence of well-behaved equivariant stratifications. Indeed, Atiyah and Bott \cite{Yang-Mills} gave an inductive procedure for computing the ordinary equivariant cohomology of a manifold in terms of the cohomologies of the strata in an equivariant stratification. Kirwan \cite{KirwanBook} then applied related ideas to a Morse-type stratification arising from the norm-square of a moment map. A paper by Harada, Henriques, and Holm \cite{HHH2005} subsequently broadened this Atiyah-Bott-Kirwan framework to include generalized equivariant cohomology calculations via infinite stratifications. This work was partly motivated by a desire to develop a GKM-type theory for the partial flag varieties of Kac-Moody groups.

Our paper has two principal objectives. The first is to provide a straightforward, self-contained account of how to perform generalized torus-equivariant cohomology computations with a finite equivariant stratification of a smooth complex projective variety. While this is readily deducible from existing work, we believe it might serve as a convenient reference for other authors. More importantly, however, it provides the context for the second of our objectives-- a computation of the generalized torus-equivariant cohomology of a direct limit of smooth projective varieties with finitely many $T$-fixed points. More specifically, we will prove the following theorem.

\begin{theorem}\label{Main Theorem}
Suppose that $T$ is a compact torus with complexification $T_{\mathbb{C}}$, and let $E_T^*$ be one of $H_T^*(\cdot;\mathbb{Z})$, $K_T^*$, and $MU_T^*$. Let $X_0\subseteq X_1\subseteq X_2\subseteq\ldots$ be a sequence of equivariant closed embeddings of smooth complex projective $T_{\mathbb{C}}$-varieties, each with finitely many $T$-fixed points. If we define $X$ to be the direct limit of the varieties $X_n$ in their classical topologies, then $$E_T^*(X)\cong\prod_{x\in X^T}E_T^*(\text{pt})$$ as $E_T^*(\text{pt})$-modules.
\end{theorem}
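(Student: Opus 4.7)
The plan is to reduce to the finite-dimensional case treated earlier in the paper and then pass to the limit via a Milnor-type short exact sequence for the cohomology of a classical-topology direct limit.

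First, I would apply the finite-dimensional computational framework to each $X_n$ separately. Since $X_n$ is a smooth complex projective $T_{\mathbb{C}}$-variety with finitely many $T$-fixed points, the Bialynicki-Birula decomposition yields an equivariant filtration whose successive strata are the affine cells indexed by the fixed points; the generalized-cohomology machinery in the spirit of Atiyah-Bott, Kirwan, and Harada-Henriques-Holm then produces an isomorphism
\[
E_T^*(X_n) \cong \bigoplus_{x \in X_n^T} E_T^*(\text{pt})
\]
of $E_T^*(\text{pt})$-modules, in which the summand at $x$ is identified with the image under restriction to $\{x\}$. Because $X_n^T$ is finite, this direct sum coincides with the corresponding direct product.

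Next, I would analyze the transition maps in the inverse system $\{E_T^*(X_n)\}$. Under the fixed-point decomposition, the map $E_T^*(X_{n+1}) \to E_T^*(X_n)$ induced by the inclusion $X_n \hookrightarrow X_{n+1}$ is precisely the projection
\[
\prod_{x \in X_{n+1}^T} E_T^*(\text{pt}) \twoheadrightarrow \prod_{x \in X_n^T} E_T^*(\text{pt})
\]
that forgets the factors indexed by $X_{n+1}^T \setminus X_n^T$. These transition maps are surjective in every degree, so the Mittag-Leffler condition is satisfied and $\lim_n{}^1 E_T^{*-1}(X_n) = 0$. Since $X^T = \bigcup_n X_n^T$, the inverse limit is visibly
\[
\lim_n E_T^*(X_n) \cong \prod_{x \in X^T} E_T^*(\text{pt}).
\]

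Finally, I would invoke the Milnor short exact sequence
\[
0 \to \lim_n{}^1 E_T^{*-1}(X_n) \to E_T^*(X) \to \lim_n E_T^*(X_n) \to 0
\]
for the direct limit of a sequence of $T$-equivariant closed embeddings of finite $T$-CW complexes. Combining it with the vanishing of $\lim^1$ and the identification of the inverse limit yields the claimed isomorphism. The main obstacle I anticipate is verifying the Milnor sequence in the required equivariant generality: each inclusion $X_n \hookrightarrow X_{n+1}$ must be promoted to a $T$-cofibration (which follows from the finite $T$-CW structure carried by a smooth projective $T_{\mathbb{C}}$-variety), and one must confirm that $H^*_T(\cdot;\mathbb{Z})$, $K_T^*$, and $MU_T^*$ are all represented by genuine $T$-spectra exhibiting the usual $\lim^1$ behaviour on classical colimits. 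This is standard, but ought to be documented theory by theory.
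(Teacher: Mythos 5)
Your overall architecture is the same as the paper's: compute $E_T^*(X_n)$ via the Bia{\l}ynicki-Birula stratification, show the inverse system $\{E_T^*(X_n)\}$ has surjective transition maps so that $\varprojlim^1$ vanishes, and identify $E_T^*(X)$ with $\varprojlim_n E_T^*(X_n)$. The gap is in your middle step, where you assert that under the fixed-point decompositions the restriction map $E_T^*(X_{n+1})\to E_T^*(X_n)$ is ``precisely the projection'' forgetting the factors indexed by $X_{n+1}^T\setminus X_n^T$. The isomorphism $E_T^*(X_n)\cong\bigoplus_{x\in X_n^T}E_T^*(\text{pt})$ coming from the stratification argument is not canonical, and in particular it is \emph{not} given by restriction to the fixed points: restriction to $X_n^T$ is injective but its image is a proper submodule in general (this is the content of GKM-type descriptions), so ``the summand at $x$ is the image under restriction to $\{x\}$'' is not available. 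Moreover, the Bia{\l}ynicki-Birula strata of $X_{n+1}$ do not restrict to those of $X_n$, so there is no reason the two decompositions are compatible with the inclusion $X_n\hookrightarrow X_{n+1}$. Since your surjectivity claim, and hence your Mittag-Leffler/$\varprojlim^1$ argument, rests on this identification, the chain of reasoning breaks here.

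The paper repairs exactly this point with two separate inputs. First, surjectivity of $E_T^*(X_{n+1})\to E_T^*(X_n)$ is proved independently (Proposition \ref{prop:Relative}(ii)), using the fact that $E_T^*(X_{n+1},X_n)$ is free of finite rank and concentrated in even degrees together with the odd-degree vanishing of $E_T^*(X_n)$, so the connecting maps in the long exact sequence of the pair vanish. Second, the compatible identifications are \emph{constructed} inductively: the resulting short exact sequence $0\to E_T^*(X_{n+1},X_n)\to E_T^*(X_{n+1})\to E_T^*(X_n)\to 0$ splits because $E_T^*(X_n)$ is free, $E_T^*(X_{n+1},X_n)$ is free of rank $\vert X_{n+1}^T\setminus X_n^T\vert$, and one chooses the isomorphism $\psi_{n+1}$ as $(\psi_n\oplus\theta_n)\circ(\pi_n,\varphi_n)$ so that the transition diagram commutes by construction. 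If you add the relative computation $E_T^*(X_{n+1},X_n)\cong\bigoplus_{x\in X_{n+1}^T\setminus X_n^T}E_T^*(\text{pt})$ (with even-degree concentration) and this inductive choice of isomorphisms, your argument closes; your concluding remarks about promoting the inclusions to $T$-cofibrations and verifying the $\varprojlim^1$ sequence for genuine $T$-spectra are appropriate and match the paper's appeal to the axiomatic framework.
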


While much of our work was inspired by \cite{HHH2005}, there are some important distinctions to be made. In \cite{HHH2005}, the authors first work in the context of a topological group $G$ and a fairly general stratified $G$-space $X$. Among other things, they provide some conditions on the stratification which explicitly determine the generalized $G$-equivariant cohomology of $X$ in terms of the cohomologies of the strata. By contrast, we deal with stratifications only in the context of a compact torus $T$ and a smooth complex projective $T_{\mathbb{C}}$-variety $Y$. We instead try to emphasize that the task of computing the generalized $T$-equivariant cohomology of $Y$ (and also direct limits of varieties $Y$) is especially simple.

Let us briefly outline the structure of this paper. Section \ref{sec:Generalized Equivariant Cohomology} begins with a brief overview of $T$-ring spectra and how they give rise to generalized $T$-equivariant cohomology theories. Recognizing that our arguments make extensive use of equivariant Euler classes, we include a short discussion of complex oriented theories. Also included in Section \ref{sec:Generalized Equivariant Cohomology} are brief descriptions of the three theories to which we will sometimes restrict our attention: ordinary equivariant cohomology $H_T^*$, (complex) equivariant K-theory $K_T^*$, and equivariant complex cobordism $MU_T^*$.

Section \ref{sec:Cohomology and Stratifications} is devoted to understanding the $E_T^*(\text{pt})$-module structure of $E_T^*(X)$, where $X$ is a $T$-space filtered by smooth complex projective $T_{\mathbb{C}}$-varieties with finitely many $T$-fixed points. We begin with \ref{sub:Finite Stratifications}, in which Thom-Gysin sequences are used to compute the generalized $T$-equivariant cohomology of a finitely stratified smooth complex projective $T_{\mathbb{C}}$-variety. In \ref{sub:Finite Fixed Points}, we specialize to the case where our variety has finitely many $T$-fixed points and $E_T^*$ is one of $H_T^*$, $K_T^*$, and $MU_T^*$. We conclude with \ref{sub:Direct Limits of Projective Varieties}, where we generalize to the case of direct limits of the varieties considered in \ref{sub:Finite Fixed Points}. This results in Theorem \ref{Main Theorem}.

In Section \ref{sec: Affine Grassmannian}, we give an example of a $T$-space satisfying the hypotheses of Theorem \ref{Main Theorem}, namely the affine Grassmannian of a simply-connected complex semisimple group.

\subsection*{Acknowledgements}
We gratefully acknowledge the support provided by Lisa Jeffrey and Paul Selick while this work was being prepared. We also wish to thank Steven Rayan for useful discussions and for his careful reading of this manuscript.

\section{Generalized Equivariant Cohomology} 
\label{sec:Generalized Equivariant Cohomology}

\subsection{General Overview} 
In the interest of clarity, we will begin with a brief overview of the pertinent parts of a generalized equivariant cohomology theory. Let $T$ denote a fixed compact torus, and define a $T$-space to be a compactly generated weak Hausdorff topological space $X$ endowed with a continuous action of $T$. These spaces form the objects of a category $\mathcal{C}_T$, whose morphisms are $T$-equivariant continuous maps. While this is precisely the category on which we would like to define our generalized $T$-equivariant cohomology theories, some of our arguments will be more transparent in the framework of the homotopy category of $T$-equivariant spectra.

Fix a complete $T$-universe, namely a real orthogonal $T$-representation $\mathcal{U}$ of countably infinite dimension, such that $\mathcal{U}$ contains infinitely many copies of each finite-dimensional $T$-representation. Recall that $T$-spectra indexed on\footnote{We will henceforth assume that all $T$-spectra are indexed on $\mathcal{U}$.} $\mathcal{U}$ (see Definition 9.4.1 of \cite{Axiomatic}) form a category, $TS^U$ \note{What are the maps, and can we comment more on what $\Sigma^\infty$ is? Right now it just plops out of nowhere}. Also, there is a suspension functor $\Sigma^{\infty}:(\mathcal{C}_T)_*\rightarrow TS^U$, where $(\mathcal{C}_T)_*$ is the category of based $T$-spaces. In this way, based $T$-spaces yield $T$-spectra, and we will sometimes make no distinction between a based $T$-space $X$ and its suspension spectrum $\Sigma^{\infty}(X)$.

The functor $\Sigma^\infty$ is one of a family of suspension functors $(\mathcal{C}_T)_*\rightarrow TS^U$ indexed by finite-dimensional real $T$-representations. Let $V$ be one such representation, and denote by $S^V$ its one-point compactification with base point at infinity.  Note that the action of $T$ on $V$ extends to an action on $S^V$ that fixes the basepoint. Smashing against these spheres generalizes the usual suspension process, defining a functor $\Sigma^V:(\mathcal{C}_T)_*\rightarrow(\mathcal{C}_T)_*$ by
$$\Sigma^V(X):=S^V\wedge X.$$

If $V\subseteq W$ is an inclusion of finite dimensional $T$-representations, we define the relative suspension of a based $T$-space $X$ to be
$$(\Sigma^{\infty}_V(X))(W):=\Sigma^{V^{\perp}}(X),$$ where $V^{\perp}$ is the orthogonal complement of $V$ in $W$.
If $V$ does not include into $W$, we define $(\Sigma^{\infty}_V(X))(W)$ to be a point. The spaces $\{(\Sigma^{\infty}_V(X))(W)\}_W$ constitute a $T$-prespectrum and therefore determine a $T$-spectrum $\Sigma^{\infty}_V(X)$. Furthermore, $X \mapsto \Sigma^\infty_V(X)$ defines a functor
$$\Sigma^{\infty}_V:(\mathcal{C}_T)_*\rightarrow TS^U.$$ One may use this functor to define desuspensions of representation spheres:
$$S^{-V}:=\Sigma^{\infty}_V(S^0).$$
If $W$ is another finite-dimensional $T$-representation, we set
$$S^{W-V}:=S^W\wedge S^{-V}.$$
This gives us a $T$-spectrum $S^{\alpha}$ for each $\alpha$ in the representation ring $RO(T;U)$(see \cite{May}).

\subsubsection{Cohomology via spectra:} 
We have developed the machinery necessary to explain how generalized $T$-equivariant cohomology theories arise from $T$-spectra. Denote by $\overline{h}TS^U$ the stable homotopy category of $T$-spectra obtained by inverting the weak equivalences in $TS^U$. Fix a $T$-spectrum $E$, and define a functor $\tilde E_T^0: \overline hTS^U \to \mathbb Z\text{-mod}$ by associating to each $T$-spectrum $F$ the abelian group $[F,E]:=\Hom(F,E)$ of morphisms in $\overline{h}TS^U$. One may extend $\tilde{E}_T^0$ to an $RO(T;U)$-graded functor by setting
\begin{equation}\label{eq:Grading}
\tilde{E}_T^{\alpha}(F):=[S^{-\alpha}\wedge F,E], \quad \alpha \in RO(T;U).
\end{equation}

We will be primarily interested in the underlying $\mathbb{Z}$-graded functor. More explicitly, if $n\in\mathbb{Z}$, then $\tilde{E}_T^n:\overline{h}TS^U\rightarrow\mathbb{Z}\text{-mod}$ is defined via \eqref{eq:Grading} by setting $\alpha$ equal to the appropriately signed $\vert n\vert$-dimensional trivial $T$-representation. The resulting $\mathbb{Z}$-graded functor $\tilde{E}_T^*$ then restricts to a reduced generalized $T$-equivariant cohomology theory on $(\mathcal{C}_T)_*$, with the associated unreduced theory $E_T^*$ on $\mathcal{C}_T$ given by
$$E_T^*(X):=\tilde{E}_T^*(X_+).$$
Here $X_+$ is the $T$ space formed by taking a disjoint union of $X$ and an additional base point.

If $E$ is additionally a commutative $T$-ring spectrum \cite[Chapter XII]{May}, then $E_T^*$ take values of the category $\text{CRing}_{\mathbb Z}$ of $\mathbb Z$-graded commutative rings.
We then have the following definition of a generalized $T$-equivariant cohomology theory suitable for our purposes.

\begin{definition}
A generalized $T$-equivariant cohomology theory is a $\mathbb{Z}$-graded functor $E_T^*:\mathcal{C}_T\rightarrow\text{CRing}_{\mathbb{Z}}$ resulting from a commutative ring $T$-spectrum $E$ as indicated above.
\end{definition}

\subsubsection{Additional Structure:} 
Given a commutative $T$-ring spectrum $E$ and a $T$-space $X$, the map $X\rightarrow\text{pt}$ yields a morphism $E_T^*(\text{pt})\rightarrow E_T^*(X)$ of $\mathbb{Z}$-graded commutative rings. This map renders $E_T^*(X)$ a module over the ring $E_T^*(\text{pt})$.

A second consideration concerns equivariant Thom and Euler classes, and requires that we take $E_T^*$ to be a complex oriented theory \cite{CGK}. In more detail, suppose that $\xi$ is a $T$-equivariant complex vector bundle of rank $n$ over a $T$-space $X$, and let $Th(\xi)$ denote the associated Thom space. There exists a $T$-equivariant Thom class $u_T(\xi)\in\tilde{E}_T^{2n}(Th(\xi))$ which shares many of the properties of its non-equivariant counterpart, such as being natural under pullbacks and multiplicative over Whitney sums.
\note{I don't think that the properties of a such a class are known in general. Which things carry over from singular cohomology and which don't. Simply saying ``there is a Thom class'' is probably confusing.}

Associated to the Thom class is the Euler class, defined as follows: If $z: X_+ \to Th(\xi)$ is the zero section of the natural projection, define $e_T(\xi)\in E_T^{2n}(X)$ as
$$e_T(\xi):=z^*(u_T(\xi))\in\tilde{E}_T^{2n}(X_+)=E_T^{2n}(X).$$
\note{The approach here is motivated by \cite{CGK} as well as (What is this reference? It was hard coded) [6] in the bibliography of the former.}

Finally, one says that $E_T^*$ is a complex stable ring theory if for each finite-dimensional complex $T$-representation $V$, there exists a class $\alpha_{V}\in\tilde{E}_T^{\dim_{\mathbb{R}}(V)}(S^V)$ with the property that multiplication by $\alpha_V$ defines an isomorphism $\tilde{E}_T^*(X)\rightarrow \tilde{E}_T^*(S^V\wedge X)$ for all $T$-spaces $X$. Setting $X=S^0$ implies that $\tilde{E}_T^*(S^V)$ is freely generated by $\alpha_V$ as a module over $E_T^*(\text{pt})$.

We note that every complex oriented theory is a complex stable ring theory\cite{CGK}.



\subsection{Important Examples} 
Despite having discussed generalized equivariant cohomology theories in the abstract, we will sometimes emphasize three important generalized $T$-equivariant cohomology theories: (ordinary) equivariant cohomology $H_T^*$, (complex) equivariant K-theory $K_T^*$, and equivariant complex cobordism $MU_T^*$. With this in mind, it will be prudent to recall the following proposition.

\begin{proposition}
Assume that $E_T^*$ is one of $H_T^*$, $K_T^*$, and $MU_T^*$. If $V$ is a finite-dimensional complex $T$-representation, then $E_T^*(S^V)$ is free and of rank one as a module over $E_T^*(\text{pt})$, and it vanishes in odd grading degrees.
\end{proposition}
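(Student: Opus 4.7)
The plan is to combine the remark that each of $H_T^*$, $K_T^*$, and $MU_T^*$ is complex oriented---and hence complex stable---with the observation that the coefficient ring $E_T^*(\text{pt})$ is itself concentrated in even degrees for each of these three theories. Granting these two ingredients, the proposition follows essentially by inspection.

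To carry this out, I would first invoke the complex stability of $E_T^*$ to produce, for each finite-dimensional complex $T$-representation $V$, a class $\alpha_V \in \tilde{E}_T^{\dim_{\mathbb{R}}(V)}(S^V) = \tilde{E}_T^{2\dim_{\mathbb{C}}(V)}(S^V)$ whose multiplication induces an isomorphism $\tilde{E}_T^*(X) \to \tilde{E}_T^*(S^V \wedge X)$. Taking $X = S^0$, as already noted in the preceding subsection, identifies $\tilde{E}_T^*(S^V)$ with the free rank-one $E_T^*(\text{pt})$-module generated by $\alpha_V$, whose generator sits in the even degree $2\dim_{\mathbb{C}}(V)$. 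Because this rank-one module is therefore just an even-degree shift of $E_T^*(\text{pt})$, the odd-degree vanishing of $\tilde{E}_T^*(S^V)$ reduces to the odd-degree vanishing of $E_T^*(\text{pt})$ itself.

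It then remains to verify, case by case, that the coefficient ring is concentrated in even degrees. For ordinary equivariant cohomology, one has $H_T^*(\text{pt}; \mathbb{Z}) \cong H^*(BT; \mathbb{Z}) \cong \mathbb{Z}[x_1, \ldots, x_r]$ with each $x_i$ in degree two, so the claim is immediate. For equivariant K-theory, Bott periodicity yields $K_T^*(\text{pt}) \cong R(T)[\beta^{\pm 1}]$ with $\beta$ in degree $-2$ and $R(T)$ in degree zero, again visibly even. For equivariant complex cobordism, this is a deeper input, but for $T$ a compact torus the evenness of $MU_T^*(\text{pt})$ is by now a well-established result in the equivariant stable homotopy literature.

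The main obstacle is the odd-degree vanishing of $MU_T^*(\text{pt})$, which is genuinely non-trivial and must be imported from outside the paper; the other two cases are classical. Modulo this one cited input, the proposition is a direct unwinding of the complex stable framework already set up in the preceding subsection.
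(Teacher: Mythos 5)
The paper does not actually prove this proposition---it is ``recalled'' as a known fact and used as a black box later on---so there is no in-text argument to compare yours against. That said, your reconstruction is correct and is exactly the intended one: it assembles the two ingredients the surrounding text has already laid out, namely that each of the three theories is complex oriented, hence complex stable, so that $\tilde{E}_T^*(S^V)$ is free of rank one on the generator $\alpha_V$ in even degree $2\dim_{\mathbb{C}}(V)$, and that the coefficient ring is concentrated in even degrees in each case. Two small points. First, as you implicitly assume, the statement must be read as being about the \emph{reduced} theory $\tilde{E}_T^*(S^V)$ (with basepoint at infinity); the unreduced $E_T^*(S^V)=\tilde{E}_T^*((S^V)_+)$ picks up an extra $E_T^*(\text{pt})$ summand from the basepoint and has rank two. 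The paper's later use of the proposition, in the proof of Proposition \ref{prop:Relative}, is indeed for $\tilde{E}_T^*(S^{V_{w_k}})$, confirming this reading. Second, you are right to single out the evenness of $MU_T^*(\text{pt})$ as the one genuinely nontrivial input: this is the L\"offler--Comeza\~na theorem that $MU_G^*(\text{pt})$ is a free $MU^*$-module concentrated in even degrees for $G$ a compact abelian Lie group (see \cite{May} and \cite{Sinha}), and it cannot be derived from the formal framework alone. With that citation supplied, your argument is complete.
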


We include a brief summary of those parts of each theory that will later prove relevant.


\subsubsection{Ordinary Equivariant Cohomology} 
We denote by $ET\rightarrow ET/T=BT$ the universal principal $T$-bundle, characterized by the property that $ET$ is a contractible space on which $T$ acts freely. If $X$ is a $T$-space, then the product $X\times ET$ carries a $T$-action and we may form the Borel mixing space $$X_T:=(X\times ET)/T.$$ We then define the ordinary $T$-equivariant cohomology of $X$ (with integer coefficients) to be $$H_T^*(X):=H^*(X_T;\mathbb{Z}),$$ the integral cohomology of $X_T$. \note{Moved this up and changed how the reference is displayed} Of course, $H_T^*$ arises from the Eilenberg-MacLane $T$-spectrum \cite[Chapter XIII]{May}.

There is a natural ring isomorphism between the base ring $H_T^*(\text{pt})$ and $\Sym_{\mathbb{Z}}(X^*(T))$, the symmetric algebra of the weight lattice $X^*(T)$ of $T$. Indeed, a weight $\mu:T\rightarrow S^1$ yields an associated line bundle $$L(\mu):=\frac{ET\times\mathbb{C}}{(\alpha,z)\sim (t\alpha,\mu(t)z)}\rightarrow BT,$$ where $t\in T$ and $(\alpha,z)\in ET\times\mathbb{C}$. The ring isomorphism then associates to $\mu\in X^*(T)$ the first Chern class $c_1(L(\mu))\in H^2(BT;\mathbb{Z})=H^2_T(\text{pt})$.


\subsubsection{Equivariant K-Theory} 
Our treatment follows that given in \cite{Segal}. Recall that for a compact $T$-space $X$, $K_T^0(X)$ is defined to be the Grothendieck group of the category of $T$-equivariant complex vector bundles {over} $X$. The operation of taking the tensor product of equivariant vector bundles renders $K_T^0(X)$ a commutative ring. One extends the definition of $K_T^0$ to a definition of $K_T^n(X)$ for $X$ locally compact and $n$ any integer. By virtue of Bott periodicity, there are natural $\mathbb{Z}$-module isomorphisms $K_T^n(X)\cong K_T^{n+2}(X)$, $n\in\mathbb{Z}$. In particular, if $n\in\mathbb{Z}$, then $K_T^{2n}(\text{pt})$ is naturally isomorphic to (the underlying abelian group of) the representation ring $R(T)$ of $T$. Note that $R(T)$ is freely generated over $\mathbb{Z}$ by $\{e^{\mu}:\mu\in X^*(T)\}$, where $e^{\mu}\in R(T)$ denotes the class of the one-dimensional complex $T$-representation of weight $\mu$. Furthermore, $K_T^{2n+1}(\text{pt})=K_T^{-1}(\text{pt})=0$. Hence, we shall identify $K_T^*(\text{pt})$ as a $\mathbb{Z}$-graded abelian group with $R(T)^{\oplus 2\mathbb{Z}}$. If we multiply elements in the grading components of the latter as elements of $R(T)$, then this becomes an isomorphism of $\mathbb{Z}$-graded commutative rings. \note{This last part should be verified.}

It will later be necessary to discuss the $T$-equivariant K-theory of spaces that are not locally compact. To encompass this larger class of spaces, we will define $T$-equivariant K-theory via its ring $T$-spectrum \cite[Chapter XIV]{May}.


\subsubsection{Equivariant Complex Cobordism} 

\note{Need to write this section. The other two theories have brief intros, so it is asymmetric to leave out CC.}

Our discussion of the equivariant complex cobordism follows that of \cite{Sinha,May}.
As in Section \ref{sec:Generalized Equivariant Cohomology}, fix a complete $T$-universe $\mathcal{U}$ and let $BU^T(n)$ denote the Grassmannian of complex linear $n$-planes in $\mathcal{U}$. This Grassmannian comes equipped with a tautological line bundle $\xi_n^T \to BU^T(n)$, which is well known to serve as a model for the universal complex $n$-plane bundle. If $V$ is a finite-dimensional complex $T$-representation, let $\xi^T_V = \xi^T_{\dim_{\mathbb{C}}(V)}$. One then forms $Th(U)$, an $R(T)$-indexed pre-spectrum whose $V^{th}$ entry is $Th(\xi_V^T)$.
The spectrification of $Th(U)$ yields the spectrum $MU_T$.

\section{Cohomology and Stratifications} 
\label{sec:Cohomology and Stratifications}

Herein we examine how to deduce the $E_T^*(\text{pt})$-module structure for spaces which admit equivariant stratifications. When there are only finitely many strata, the process amounts to inductively adding strata and will terminate after finitely many steps. We explore this case further in Section \ref{sub:Projective Variety Example} 
using a natural stratification of a smooth projective $T_{\mathbb{C}}$-variety admitting finitely many $T$-fixed points.

Section \ref{sub:Direct Limits of Projective Varieties} then provides a generalization of Section \ref{sub:Projective Variety Example}, replacing smooth projective $T_{\mathbb{C}}$-varieties with direct limits thereof.


\note{Note to self: Add in some introduction here, then define $T$-equivariant stratification. Move original intro down.}

\subsection{Finite Stratifications} 
\label{sub:Finite Stratifications}

Throughout this section let $T$ be a compact torus with complexification $T_{\mathbb{C}}$, and assume that $E_T^*$ is a complex oriented generalized equivariant cohomology theory.

\begin{definition}
\label{def:Finite Stratification}
Let $X$ be a smooth complex projective variety on which $T_{\mathbb C}$ acts algebraically. A \emph{$T$-equivariant stratification of $X$} consists of a finite partially ordered set $B$ and a collection $\{X_{\beta}\}_{\beta\in B}$ of pairwise disjoint smooth $T$-invariant locally closed subvarieties of $X$ satisfying
\begin{itemize}
\item[(i)] $X=\bigcup_{\beta\in B}X_{\beta}$, and
\item[(ii)] $\overline{X_{\beta}}=\bigcup_{\gamma\leq\beta}X_{\gamma}$ for all $\beta\in B$.
\end{itemize}
\end{definition}

\subsection*{Example} Examples of Definition \ref{def:Finite Stratification} include Bruhat cell decompositions of partial flag varieties. More precisely, suppose that $T_{\mathbb{C}}$ is a maximal torus of a connected, simply-connected complex semisimple group $G$. Suppose further that $T_{\mathbb{C}}\subseteq B\subseteq P$, where $B$ and $P$ are Borel and parabolic subgroups of $G$, respectively. Let $W$ denote the Weyl group and $W_P$ the subgroup of $W$ associated with $P$. One has the partial flag variety $X=G/P$, on which $G$ acts algebraically by left-multiplication. The $T$-fixed points of $X$ are naturally indexed by $W/W_P$. Also, each $B$-orbit contains a unique $T$-fixed point, giving the Bruhat decomposition $$X=\bigsqcup_{u\in W/W_P}BuP/P.$$ For each $u\in W/W_P$, set $X_u:=BuP/P$. Endowing $W/W_P$ with the Bruhat order, one has the closure relations $$\overline{X_{u}}=\bigsqcup_{v\leq u}X_v.$$ Hence, $\{X_u\}_{u\in W/W_P}$ is a $T$-equivariant stratification of $X$.\hfill $\square$ \\

Fix a smooth complex projective $T_{\mathbb C}$-variety $X$ and let $\{X_\beta \}_{\beta \in B}$ be a given equivariant stratification. For each fixed $\beta\in B$, let $N_{\beta}\rightarrow X_{\beta}$ denote the normal bundle of $X_{\beta}$ in $X$ and let $d(\beta)$ denote its rank. The bundle $N_{\beta}$ has a $T$-equivariant Thom class $u_T(\beta)\in \tilde E_T^{2d(\beta)}(Th(N_{\beta}))$ and an associated Euler class $e_T(\beta) \in E_T^{2d(\beta)}(X_\beta)$.


\begin{theorem}\label{thm:Finite Module Isomorphism}
Assume that for each $\beta\in B$, $E_T^*(X_{\beta})$ is a free module over $E_T^*(\text{pt})$, and that $e_T(\beta)$ is not a zero-divisor in $E_T^*(X_{\beta})$. There is an isomorphism
$$E_T^*(X)\cong\bigoplus_{\beta\in B}E_T^*(X_{\beta})$$
of $E_T^*(\text{pt})$-modules.
\end{theorem}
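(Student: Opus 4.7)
The plan is to induct on the cardinality $n = |B|$, using the Thom-Gysin long exact sequence to peel off one stratum at a time. Choose a linear extension $\beta_1, \ldots, \beta_n$ of the partial order on $B$ and set $Y_k := \bigcup_{i \leq k} X_{\beta_i}$; condition (ii) of Definition \ref{def:Finite Stratification} ensures that each $Y_k$ is closed in $X$, that $Y_n = X$, and that $X_{\beta_k}$ is open in $Y_k$ with closed complement $Y_{k-1}$. My inductive claim is that $E_T^*(Y_k)$ is free as an $E_T^*(\text{pt})$-module and isomorphic to $\bigoplus_{i \leq k} E_T^*(X_{\beta_i})$; the base case $k = 1$ is immediate since $Y_1 = X_{\beta_1}$.

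For the inductive step, the cofiber sequence $Y_{k-1,+} \hookrightarrow Y_{k,+} \to Y_k/Y_{k-1}$ yields a long exact sequence in $E_T^*$. A $T$-equivariant tubular neighborhood of $X_{\beta_k}$ inside $Y_k$, combined with excision, identifies $Y_k/Y_{k-1}$ with $Th(N_{\beta_k})$. Complex orientability of $E_T^*$ then supplies a Thom isomorphism $\tilde E_T^*(Th(N_{\beta_k})) \cong E_T^{*-2d(\beta_k)}(X_{\beta_k})$, so the long exact sequence takes the form
\begin{equation*}
\cdots \to E_T^{*-2d(\beta_k)}(X_{\beta_k}) \xrightarrow{f} E_T^*(Y_k) \xrightarrow{i^*} E_T^*(Y_{k-1}) \xrightarrow{\delta} E_T^{*-2d(\beta_k)+1}(X_{\beta_k}) \to \cdots.
\end{equation*}

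The crux is to show $\delta = 0$, equivalently that $f$ is injective in every degree. Tracing the Thom class through the tubular neighborhood identification, the composition
\begin{equation*}
E_T^{*-2d(\beta_k)}(X_{\beta_k}) \xrightarrow{f} E_T^*(Y_k) \xrightarrow{j^*} E_T^*(X_{\beta_k}),
\end{equation*}
where $j : X_{\beta_k} \hookrightarrow Y_k$ is the open inclusion, coincides with multiplication by the Euler class $e_T(\beta_k)$. Since $e_T(\beta_k)$ is assumed to be a non-zero-divisor, this composition, and therefore $f$ itself, is injective. The long exact sequence consequently collapses into the short exact sequence
\begin{equation*}
0 \to E_T^{*-2d(\beta_k)}(X_{\beta_k}) \xrightarrow{f} E_T^*(Y_k) \xrightarrow{i^*} E_T^*(Y_{k-1}) \to 0.
\end{equation*}

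By the inductive hypothesis $E_T^*(Y_{k-1})$ is free, hence projective, so this sequence splits and yields $E_T^*(Y_k) \cong E_T^{*-2d(\beta_k)}(X_{\beta_k}) \oplus E_T^*(Y_{k-1}) \cong \bigoplus_{i \leq k} E_T^*(X_{\beta_i})$ as $E_T^*(\text{pt})$-modules; the resulting module is again free, closing the induction. Setting $k = n$ gives the desired decomposition of $E_T^*(X)$. The principal obstacle is the equivariant topological setup: one must produce a $T$-equivariant tubular neighborhood for the smooth locally closed subvariety $X_{\beta_k}$ in $Y_k$ and then verify, via a diagram chase invoking the definition of the Euler class as the pullback of the Thom class along the zero section, that $j^* \circ f$ really is multiplication by $e_T(\beta_k)$.
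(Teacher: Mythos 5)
Your overall strategy --- peel off one stratum at a time with a Thom--Gysin sequence, use the non-zero-divisor hypothesis on $e_T(\beta_k)$ to kill the connecting map, and split by freeness --- is exactly the paper's, but you run the induction in the wrong direction, and this breaks the key identification. You filter by the closed sets $Y_k=\bigcup_{i\le k}X_{\beta_i}$ coming from a linear extension, so the stratum $X_{\beta_k}$ being attached is, as you yourself note, \emph{open} in $Y_k$ with closed complement $Y_{k-1}$. The quotient $Y_k/Y_{k-1}$ is then the one-point compactification of $X_{\beta_k}$, not the Thom space of its normal bundle in $X$: the tubular-neighborhood-plus-excision identification $E_T^*(A,A\setminus S)\cong\tilde{E}_T^*(Th(N_S))$ requires $S$ to be a \emph{closed} submanifold of $A$. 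Concretely, take $X=\mathbb{CP}^1$ stratified by a point $p$ and the big cell $\mathbb{C}$; a linear extension forces $\beta_1=p$, $\beta_2=\mathbb{C}$, so $Y_2/Y_1=\mathbb{CP}^1/p\simeq S^2$, whereas $Th(N_{\beta_2})=\mathbb{C}_+$ because the open stratum has normal bundle of rank $0$. The reduced cohomologies sit in degrees $2$ and $0$ respectively, so your asserted isomorphism $\tilde{E}_T^*(Y_k/Y_{k-1})\cong E_T^{*-2d(\beta_k)}(X_{\beta_k})$ fails, and with it the identification of $j^*\circ f$ with multiplication by $e_T(\beta_k)$.

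The repair is to reverse the order, as the paper does: enumerate $B$ by repeatedly choosing \emph{maximal} elements of what remains, so that each $J_k=\{\beta_1,\dots,\beta_k\}$ is upward-closed, $X_{J_k}$ is \emph{open} in $X$, and the newly added stratum $X_{\beta_k}$ is \emph{closed} in $X_{J_k}$ with open complement $X_{J_{k-1}}$. Then $E_T^*(X_{J_k},X_{J_{k-1}})\cong\tilde{E}_T^*(Th(N_{\beta_k}))$ is legitimate, the Thom isomorphism gives the shift by $2d(\beta_k)$, and the rest of your argument (the Euler class description of the composite, injectivity, degeneration to a short exact sequence, splitting by freeness) goes through essentially verbatim. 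Note that you cannot simply patch your closed filtration by replacing $Th(N_{\beta_k})$ with the compactification $X_{\beta_k}^{+}$: the resulting composite into $E_T^*(X_{\beta_k})$ is no longer multiplication by $e_T(\beta_k)$ (in the Bia{\l}ynicki-Birula situation it is multiplication by the Euler class of the \emph{cell} rather than of the normal bundle), so the stated hypotheses would not apply directly.
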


\begin{proof}
Following \cite{Yang-Mills}, we define a subset $J\subseteq B$ to be  \textit{open} if whenever $\beta\in J$ and $\gamma\in B$ satisfy $\beta\leq\gamma$, we have $\gamma\in J$. This definition has the desirable property that if $J\subseteq B$ is open, then $$X_J:=\bigcup_{\beta\in J}X_{\beta}$$ is an open subset of $X$.

Choose a maximal element $\beta_1\in B$ and set $J_1:=\{\beta_1\}$, an open subset of $B$. We inductively define subsets $J_k\subseteq B$, $k\in\{2,\ldots,\vert B\vert\}$, by the condition that $J_k=\{\beta_1,\ldots,\beta_k\}$ with $\beta_k$ a maximal element of $B\setminus J_{k-1}$. By construction, $J_k$ is open for all $k$.

We have graded $E_T^*(\text{pt})$-module isomorphisms
\begin{equation}
	\label{eq:Bundle}
	E_T^*(X_{J_k},X_{J_{k-1}})\cong E_T^*(Th(N_{\beta_k}))\cong E_T^{*-2d(\beta_k)}(X_{\beta_k}),
\end{equation}
the second being the Thom Isomorphism (see \cite{May}, Theorem 9.2). Using \eqref{eq:Bundle}, the long exact sequence of the pair $(X_{J_k},X_{J_{k-1}})$ takes the form
\begin{equation}
	\label{eq:Long-Exact}
	\ldots\rightarrow E_T^{i-2d(\beta_k)}(X_{\beta_k})\xrightarrow\phi E_T^i(X_{J_k})\rightarrow E_T^i(X_{J_{k-1}})\rightarrow E_T^{i-2d(\beta_k)+1}(X_{\beta_k})\rightarrow\ldots.
	\end{equation}

If $E_T^i(X_{J_k}) \to E_T^i(\beta_k)$ is the restriction map, the composition
	$$E_T^{i-2d(\beta_k)}(X_{\beta_k}) \xrightarrow\phi E_T^i(X_{J_k}) \rightarrow E_T^i(X_{\beta_k})$$
is equivalent to multiplication by the equivariant Euler class $e_T(\beta_k)$. As $e_T(\beta_k)$ is not a zero divisor, the composition is injective, forcing $\phi$ to be injective. Hence \eqref{eq:Long-Exact} degenerates to the short exact sequence
\begin{equation}
	\label{eq:Thom-Gysin}
	0 \rightarrow E_T^{*-2d(\beta_k)}(X_{\beta_k})\rightarrow E_T^*(X_{J_k})\rightarrow E_T^*(X_{J_{k-1}})\rightarrow 0
\end{equation}
of $E_T^*(\text{pt})$-modules. Using \eqref{eq:Thom-Gysin} and induction, we will prove that
\begin{equation}
	\label{eq:Induction Claim}
	E_T^*(X_{J_k})\cong\bigoplus_{\ell\leq k}E_T^*(X_{\beta_\ell})
\end{equation} for all $k\in\{2,\ldots,\vert B\vert\}$, {from which the theorem will follow}.

In the base case $k=2$, our short exact sequence is $$0 \rightarrow E_T^{*-2d(\beta_2)}(X_{\beta_2})\rightarrow E_T^*(X_{J_2})\rightarrow E_T^*(X_{\beta_1})\rightarrow 0.$$ This sequence splits by virtue of the fact that $E_T^*(X_{\beta_1})$ is a free $E_T^*(\text{pt})$-module. Hence, $$E_T^*(X_{J_2})\cong E_T^*(X_{\beta_1})\oplus E_T^*(X_{\beta_2}).$$

Assume now that \eqref{eq:Induction Claim} holds for some $k\leq\vert B\vert -1$ and replace $k$ with $k+1$ in \eqref{eq:Thom-Gysin} to obtain the sequence
\begin{equation}
0\rightarrow E_T^{*-2d(\beta_{k+1})}(X_{\beta_{k+1}})\rightarrow E_T^*(X_{J_{k+1}})\rightarrow E_T^*(X_{J_{k}})\rightarrow 0.
\label{eq:SES}
\end{equation}
By assumption, $E_T^*(X_{J_k})$ is free, so \eqref{eq:SES} splits. Hence, \eqref{eq:Induction Claim} holds if we replace $k$ with $k+1$, and our induction is complete.
\end{proof}

\begin{remark}
The isomorphism in Theorem \ref{thm:Finite Module Isomorphism} does not respect the $\mathbb{Z}$-gradings of $E_T^*(X)$ and $\bigoplus_{\beta\in B}E_T^*(X_{\beta})$. To compensate for the degree-shift of $2d(\beta)$ appearing in \eqref{eq:Thom-Gysin}, one can identify $E_T^*(X_{\beta})$ as an $E_T^*(\text{pt})$-module with the principal ideal $\langle e_T(\beta)\rangle$ generated by $e_T(\beta)$. This gives us an isomorphism
\begin{equation}
	\label{eq:Graded Isomorphism}
	E_T^*(X)\cong\bigoplus_{\beta\in B}\langle e_T(\beta)\rangle
\end{equation}
on the level of both $E_T^*(\text{pt})$-modules and $\mathbb{Z}$-graded abelian groups.
\end{remark}


\note{I have moved this up from Section 2.2. Since this is the first section that talks about the Euler class not being a zero divisor, it seems like the conclusion of this section would be a good time to point out that the above Theorem applies to these cohomology theories. Also, add a part concerning $MU_T^*$. This may necessitate changing the lemma to say that $e_T(V)$ is not a zero divisor, for which the particular formulas (i) and (ii) are specifically mentioned in the proofs.}

\subsection{The Case of Finitely Many Fixed Points}\label{sub:Projective Variety Example} 
\label{sub:Finite Fixed Points}

\note{I have completely changed the part of this section prior to the lemma.}
The approach outlined in Section \ref{sec:Cohomology and Stratifications} can be combined with a suitable Bia{\l}ynicki-Birula stratification to yield the $E_T^*$-module structure of a smooth complex projective $T_{\mathbb{C}}$-variety with finitely many $T$-fixed points. More explicitly, we will prove the following theorem:

\begin{theorem}\label{thm:Module Structure}
Suppose that $E_T^*$ is one of $H_T^*$, $K_T^*$, and $MU_T^*$. If $X$ is a smooth complex projective $T_{\mathbb{C}}$-variety with finitely many $T$-fixed points, then $E_T^*(X)$ is a free $E_T^*(\text{pt})$-module of rank $\vert X^T\vert$
\end{theorem}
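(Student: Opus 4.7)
The plan is to apply Theorem \ref{thm:Finite Module Isomorphism} to a Bia{\l}ynicki-Birula stratification of $X$. Choose a generic algebraic one-parameter subgroup $\lambda:\mathbb{C}^{*}\rightarrow T_{\mathbb{C}}$ whose fixed points on $X$ coincide with $X^{T}$; this is possible precisely because $X^{T}$ is finite. For each $p\in X^{T}$, set
$$X_{p}:=\{x\in X:\lim_{t\rightarrow 0}\lambda(t)\cdot x=p\}.$$
Standard Bia{\l}ynicki-Birula theory then yields a finite $T$-equivariant decomposition $X=\bigsqcup_{p\in X^{T}}X_{p}$ into $T$-invariant locally closed smooth subvarieties, each $T$-equivariantly isomorphic to the positive-weight subspace $T_{p}X^{+}$ of the tangent representation, and with closure relations giving a $T$-equivariant stratification in the sense of Definition \ref{def:Finite Stratification}.

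First I would verify the freeness hypothesis of Theorem \ref{thm:Finite Module Isomorphism}. Since each $X_{p}$ is $T$-equivariantly isomorphic to a $T$-representation, it $T$-equivariantly deformation retracts to the point $p$, so $E_{T}^{*}(X_{p})\cong E_{T}^{*}(\text{pt})$ is free of rank one. Moreover, the normal bundle $N_{p}\rightarrow X_{p}$ is $T$-equivariantly isomorphic to the pullback along $X_{p}\rightarrow\{p\}$ of the negative-weight subspace $V_{p}:=T_{p}X^{-}$, so that under the identification $E_{T}^{*}(X_{p})\cong E_{T}^{*}(\text{pt})$ the equivariant Euler class $e_{T}(\beta_{p})$ becomes $e_{T}(V_{p})\in E_{T}^{*}(\text{pt})$.

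The main obstacle is to check that $e_{T}(V_{p})$ is not a zero-divisor in $E_{T}^{*}(\text{pt})$ for each of the three theories. Since $\lambda$ was generic, $V_{p}$ decomposes as a direct sum of one-dimensional complex $T$-representations $\mathbb{C}_{\mu_{i}}$ with every $\mu_{i}\neq 0$, and the multiplicativity of Euler classes over Whitney sums gives $e_{T}(V_{p})=\prod_{i}e_{T}(\mathbb{C}_{\mu_{i}})$. For $H_{T}^{*}$ one has $e_{T}(\mathbb{C}_{\mu})=\mu$ in $\Sym_{\mathbb{Z}}(X^{*}(T))$, a nonzero element of a polynomial ring. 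For $K_{T}^{*}$, the equivariant Euler class is $e_{T}(\mathbb{C}_{\mu})=1-e^{-\mu}$ in $R(T)$, which is a non-zero-divisor in the Laurent polynomial ring $R(T)$ whenever $\mu\neq 0$. For $MU_{T}^{*}$, the class $e_{T}(\mathbb{C}_{\mu})$ is the $[\mu]$-series in the equivariant formal group law; non-zero-divisor-ness in $MU_{T}^{*}(\text{pt})$ for nontrivial weights is established in the work of Sinha and of Comeza\~{n}a cited as part of \cite{Sinha,May}. In each case a finite product of non-zero-divisors is a non-zero-divisor, so $e_{T}(V_{p})$ has the required property.

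With both hypotheses verified, Theorem \ref{thm:Finite Module Isomorphism} produces an $E_{T}^{*}(\text{pt})$-module isomorphism
$$E_{T}^{*}(X)\;\cong\;\bigoplus_{p\in X^{T}}E_{T}^{*}(X_{p})\;\cong\;\bigoplus_{p\in X^{T}}E_{T}^{*}(\text{pt}),$$
exhibiting $E_{T}^{*}(X)$ as free of rank $|X^{T}|$ and completing the proof. The bulk of the work is really bookkeeping: the only nontrivial ingredients are the existence of the Bia{\l}ynicki-Birula stratification and the theory-by-theory verification that Euler classes of nontrivial $T$-representations are non-zero-divisors.
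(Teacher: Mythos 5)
Your proposal follows essentially the same route as the paper: a Bia{\l}ynicki-Birula stratification coming from a generic coweight (the paper's Lemma \ref{lemma:Coweight}), identification of each stratum's cohomology with $E_T^*(\text{pt})$ and of its Euler class with that of the negative tangent weights, a theory-by-theory check that Euler classes of fixed-point-free representations are non-zero-divisors (the paper's Lemma \ref{lemma:Formula}), and then an application of Theorem \ref{thm:Finite Module Isomorphism}. The argument is correct and matches the paper's proof in all essentials.
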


For the duration of this section, we will assume that everything is as given in the statement of Theorem \ref{thm:Module Structure}.

\begin{lemma}\label{lemma:Coweight}
There exists a coweight $\lambda:\mathbb{C}^*\rightarrow T_{\mathbb{C}}$ with the property that the fixed points of the resulting $\mathbb{C}^*$-action on $X$ are precisely the $T$-fixed points.
\end{lemma}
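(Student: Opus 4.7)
The plan is to choose $\lambda$ to be ``generic'' with respect to the finite collection of $T$-weights occurring in tangent spaces at the $T$-fixed points. First, for each $p \in X^T$, the smooth $T_{\mathbb{C}}$-action linearizes at $p$, giving a weight decomposition $T_pX = \bigoplus_{\alpha} (T_pX)_{\alpha}$ indexed by a finite set $\Phi_p \subseteq X^*(T)$. Because $X^T$ is finite, $p$ is an isolated $T$-fixed point, so smoothness of the $T$-fixed scheme in characteristic zero gives $T_p(X^T) = (T_pX)^T$, and this space must be zero. In other words, $0 \notin \Phi_p$ for every $p \in X^T$, and the finite union $\Phi := \bigcup_{p\in X^T} \Phi_p$ lies in $X^*(T) \setminus \{0\}$.

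Next I would produce $\lambda$ by avoiding the finitely many hyperplanes $\alpha^{\perp} \subset X_*(T)\otimes \mathbb{R}$ associated to $\alpha \in \Phi$. Their complement is a nonempty open cone, and since $X_*(T)$ is a full-rank lattice in $X_*(T)\otimes \mathbb{R}$, this complement contains integral points. Pick any such $\lambda \in X_*(T)$, so that $\langle \alpha, \lambda \rangle \neq 0$ for every $\alpha \in \Phi$.

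The remaining task is to verify $X^{\lambda(\mathbb{C}^*)} = X^T$. The inclusion $X^T \subseteq X^{\lambda(\mathbb{C}^*)}$ is automatic from $\lambda(\mathbb{C}^*) \subseteq T_{\mathbb{C}}$ and $X^{T_{\mathbb{C}}} = X^T$. For the reverse inclusion, set $Y := X^{\lambda(\mathbb{C}^*)}$; Bia{\l}ynicki-Birula guarantees that $Y$ is a smooth closed subvariety of $X$, and $T$-invariance of $Y$ follows from commutativity of $T_{\mathbb{C}}$. Let $Y_0$ be any connected component of $Y$. Then $Y_0$ is $T$-invariant and projective, so the Borel fixed point theorem furnishes some $p \in Y_0 \cap X^T$. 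Computing tangent spaces at $p$,
$$T_pY_0 \;\subseteq\; (T_pX)^{\lambda(\mathbb{C}^*)} \;=\; \bigoplus_{\langle \alpha,\lambda\rangle = 0} (T_pX)_{\alpha},$$
which vanishes by our choice of $\lambda$. Smoothness and connectedness of $Y_0$ then force $Y_0 = \{p\}$, so $Y \subseteq X^T$.

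The only subtle point is identifying $T_p(X^T)$ with the $T$-invariants in $T_pX$, which underlies the fact that no tangent weight at an isolated $T$-fixed point is trivial; everything else is a genericity argument in $X_*(T)$ combined with the Borel fixed point theorem applied component-by-component to $X^{\lambda(\mathbb{C}^*)}$.
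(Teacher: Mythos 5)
Your proposal is correct and follows essentially the same route as the paper's own proof: choose $\lambda$ so that $\langle\lambda,\mu\rangle\neq 0$ for all tangent weights $\mu$ at the $T$-fixed points, then apply the Borel fixed point theorem to each component of $X^{\lambda(\mathbb{C}^*)}$ and use the vanishing of the zero-weight space to conclude each component is a single $T$-fixed point. The only difference is that you spell out two details the paper leaves implicit (that $0$ is not a tangent weight at an isolated fixed point, and that a generic integral coweight exists off the finitely many hyperplanes), which is a reasonable addition but not a different argument.
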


\begin{proof}
Choose a coweight $\lambda$ such that for every $w\in X^T$ and weight $\mu:T_{\mathbb{C}}\rightarrow\mathbb{C}^*$ of the isotropy representation $T_wX$, the pairing $\langle\lambda,\mu\rangle$ is non-zero. This coweight yields an algebraic action of $\mathbb{C}^*$ on $X$, and we suppose that $Y$ is an irreducible component of $X^{\mathbb{C}^*}$. Note that $Y$ is a smooth closed $T_{\mathbb{C}}$-invariant subvariety of $X$. 
By the Borel Fixed Point Theorem, $Y$ has a $T$-fixed point $y$. Since $T_yY$ is precisely the trivial weight space of the $\mathbb{C}^*$-representation on $T_yX$, our choice of $\lambda$ implies that $T_yY=\{0\}$. It follows that $Y=\{y\}$, giving the inclusion $X^{\mathbb{C}^*}\subseteq X^T$.
\end{proof}

Now, select $\lambda:\mathbb{C}^*\rightarrow T_{\mathbb{C}}$ as in Lemma \ref{lemma:Coweight}. 
Given $w\in X^{\mathbb{C}^*}=X^T$, one has the smooth locally closed subvariety
\begin{equation}
	\label{eq:BB definition}
	X_w:=\left\{x\in X:\lim_{t\to 0}(\lambda(t)\cdot x)=w\right\}.
\end{equation}
The $X_w$ constitute a Bia{\l}ynicki-Birula stratification \cite{Bialynicki-Birula1973}, a $T$-equivariant stratification of $X$. Furthermore, $X_w$ is $T$-equivariantly homeomorphic to the $T$-submodule $(T_wX)^+$ of $T_wX$ spanned by the weight vectors whose weights have strictly positive pairing with $\lambda$. In particular, $X_w$ equivariantly retracts onto its $T$-fixed point $\{w\}$ and we have a ring isomorphism $r_w:E_T^*(X_w)\xrightarrow{\cong}E_T^*(\{w\})$. If $e_T(w)\in E_T^*(X_w)$ denotes the $T$-equivariant Euler class of the normal bundle of $X_w$ in $X$, then $r_w(e_T(w))$ is the $T$-equivariant Euler class of the quotient representation $T_{w}(X)/T_{w}X_w\rightarrow\{w\}$.

\begin{lemma}\label{lemma:Formula}
Let $V$ be a finite-dimensional complex $T$-representation such that $V^T = \{ 0\}$, viewed as a $T$-equivariant vector bundle over a point. If $E_T^*$ is $H_T^*, K_T^*$, or $MU_T^*$, then the $T$-equivariant Euler class $e_T(V) \in E_T^*(\text{pt})$ is not a zero divisor.
\end{lemma}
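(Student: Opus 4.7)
The plan is to reduce to the case $\dim_{\mathbb{C}} V = 1$ and then verify the claim for each of the three theories. Since $T$ is a torus, the complex representation $V$ decomposes as a direct sum $V = \bigoplus_{i=1}^n L_{\mu_i}$ of one-dimensional complex $T$-representations with weights $\mu_i \in X^*(T)$; the hypothesis $V^T = \{0\}$ forces each $\mu_i$ to be non-zero. Multiplicativity of the equivariant Euler class over Whitney sums, a standard feature of complex oriented theories recalled in Section \ref{sec:Generalized Equivariant Cohomology}, yields
$$e_T(V) = \prod_{i=1}^n e_T(L_{\mu_i}) \in E_T^*(\text{pt}).$$
Because a product of non-zero divisors in a commutative ring is again a non-zero divisor, the problem reduces to showing that $e_T(L_\mu)$ is a non-zero divisor whenever $\mu \neq 0$.

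For $H_T^*$ and $K_T^*$ this is essentially immediate from the explicit presentations of their coefficient rings. The ring $H_T^*(\text{pt}) \cong \Sym_{\mathbb{Z}}(X^*(T))$ is a polynomial algebra over $\mathbb{Z}$, hence an integral domain, and under the identification recalled earlier the class $e_T(L_\mu) = c_1(L(\mu))$ corresponds to the non-zero element $\mu$ of the symmetric algebra. Similarly, $K_T^0(\text{pt}) \cong R(T)$ is a Laurent polynomial ring over $\mathbb{Z}$ (since $T$ is a torus), again an integral domain, in which $e_T(L_\mu) = 1 - e^{-\mu}$ is manifestly non-zero for $\mu \neq 0$. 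In both cases a non-zero element of an integral domain is automatically a non-zero divisor.

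The only genuinely subtle case is $MU_T^*$, where the coefficient ring $MU_T^*(\text{pt})$ does not admit such a transparent presentation; this is where I expect the main obstacle to lie. Here I would appeal to the computations of $MU_T^*(\text{pt})$ developed in \cite{Sinha} within the framework of \cite{May}, which express this ring in terms of Euler classes of irreducible $T$-representations and imply, in particular, that $e_T(L_\mu)$ acts injectively on $MU_T^*(\text{pt})$ for every non-zero weight $\mu$. Combined with the reduction above, this suffices to conclude that $e_T(V)$ is a non-zero divisor in all three cases.
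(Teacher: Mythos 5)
Your proof is correct and follows essentially the same route as the paper's: reduce to the one-dimensional case via multiplicativity of the Euler class, identify $e_T(L_\mu)$ explicitly with $\mu\in\Sym_{\mathbb{Z}}(X^*(T))$ for $H_T^*$ and with $1-e^{\pm\mu}\in R(T)$ for $K_T^*$, and cite \cite{Sinha} for $MU_T^*$. The only (minor, and arguably favorable) difference is that you invoke ``a product of non-zero-divisors is a non-zero-divisor,'' which avoids the paper's reliance on $E_T^*(\text{pt})$ being an integral domain --- a fact that is transparent for $H_T^*$ and $K_T^*$ but less so for $MU_T^*$.
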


\begin{proof}
Note that $E_T^*(\text{pt})$ is an integral domain for each of the above three theories. By virtue of the Whitney sum formula, it therefore suffices to prove that $e_T(V)$ is non-zero when $V$ is one-dimensional.

Let $\mu\in X^*(T)$ be the (non-zero) weight of $V$. If $E_T^*=H_T^*$, then $e_T(V)$ is the ordinary Euler class of the associated bundle $ET\times_TV\rightarrow BT$. Under the usual ring isomorphism $H^*(BT;\mathbb{Z})\cong\Sym_{\mathbb{Z}}(X^*(T))$, this Euler class corresponds to the weight $\mu$.

When $E_T^*=K_T^*$, the equivariant Euler class of a complex $T$-representation is given by the alternating sum of its exterior powers in $K_T^*(\text{pt})$\cite[Chapter XIV, Theorem 3.2]{May}. Hence, $e_T(V)=1-[V]\in K_T^2(\text{pt})$, which is identified with $1-e^{\mu}$ under the isomorphism $K_T^2(\text{pt})\cong R(T)$. We thus see that $e_T(V)\neq 0$.

In the case of $MU_T^*$, we simply appeal to \cite{Sinha}.
\end{proof}

Since the $T$-fixed points in $X$ are isolated, zero is not a weight of the representation $T_{w}X/T_{w}X_w$.  By Lemma \ref{lemma:Formula}, we conclude that $r_w(e_T(w))$ is not a zero-divisor in $E_T^*(\{w\})$, meaning that $e_T(w)$ is not a zero divisor. An application of Theorem \ref{thm:Finite Module Isomorphism} then yields an $E_T^*(\text{pt})$-module isomorphism
$$E_T^*(X)\cong\bigoplus_{w\in X^T}E_T^*(X_w).$$
In particular, $E_T^*(X)$ is free of rank $\vert X^T\vert$, proving Theorem \ref{thm:Module Structure}.


Theorem \ref{thm:Module Structure} will prove essential in extending our results to the case of direct limits of projective varieties. To realize the extension, we will require the following lemma.

\begin{proposition}\label{prop:Relative}
If $Y$ is a smooth closed $T_{\mathbb{C}}$-invariant subvariety of $X$, then
\begin{itemize}
\item[(i)] $E_T^*(X,Y)$ is a free $E_T^*(\text{pt})$-module of finite rank vanishing in odd grading degrees, and
\item[(ii)] the restriction map $E_T^*(X)\rightarrow E_T^*(Y)$ is surjective.
\end{itemize}
\end{proposition}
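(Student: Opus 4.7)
The plan is to first establish (ii) and then derive (i). Since $Y$ is itself a smooth complex projective $T_{\mathbb{C}}$-variety with finitely many $T$-fixed points, Theorem \ref{thm:Module Structure} applies to both $X$ and $Y$, giving that $E_T^*(X)$ and $E_T^*(Y)$ are free $E_T^*(\text{pt})$-modules of finite rank concentrated in even degrees. I choose a coweight $\lambda : \mathbb{C}^* \to T_{\mathbb{C}}$ via Lemma \ref{lemma:Coweight} so that $X^\lambda = X^T$; since $Y$ is closed and $T_{\mathbb{C}}$-invariant, also $Y^\lambda = Y^T$, and both $X$ and $Y$ inherit Bia{\l}ynicki-Birula stratifications with $Y_w = Y \cap X_w$ for each $w \in Y^T$.

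For (ii), enumerate $X^T = \{\beta_1, \ldots, \beta_n\}$ by iteratively selecting maximal elements as in the proof of Theorem \ref{thm:Finite Module Isomorphism}, producing the open filtration $\emptyset = X_{J_0} \subset X_{J_1} \subset \cdots \subset X_{J_n} = X$, and set $Y_k := Y \cap X_{J_k}$. Because the closure of each $Y$-stratum in $Y$ is contained in the closure of the corresponding $X$-stratum in $X$, the Bia{\l}ynicki-Birula partial order on $Y^T$ refines the restriction of the $X$-order to $Y^T$, so each $J_k \cap Y^T$ is upward-closed in $Y^T$ and $\{Y_k\}$ forms an open filtration of $Y$ to which the argument of Theorem \ref{thm:Finite Module Isomorphism} applies. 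I then induct on $k$ to show that the restriction $r_k : E_T^*(X_{J_k}) \to E_T^*(Y_k)$ is surjective, working with the commutative diagram of Thom--Gysin short exact sequences
\begin{CD}
0 @>>> E_T^*(X_{J_k}, X_{J_{k-1}}) @>>> E_T^*(X_{J_k}) @>>> E_T^*(X_{J_{k-1}}) @>>> 0 \\
@. @V \phi VV @V r_k VV @V r_{k-1} VV @. \\
0 @>>> E_T^*(Y_k, Y_{k-1}) @>>> E_T^*(Y_k) @>>> E_T^*(Y_{k-1}) @>>> 0.
\end{CD}
When $\beta_k \notin Y^T$ the lower-left group vanishes and surjectivity of $r_k$ follows immediately from that of $r_{k-1}$. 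When $\beta_k \in Y^T$, a diagram chase reduces the inductive step to producing a lift of the Bia{\l}ynicki-Birula--Thom generator of $E_T^*(Y_k, Y_{k-1})$ through $E_T^*(X_{J_k})$; I assemble such a lift from the Bia{\l}ynicki-Birula--Thom class of $X$ at $\beta_k$ together with correction terms drawn from the lift $\tilde\eta_{k-1} \in E_T^*(X_{J_{k-1}})$ supplied by the inductive hypothesis.

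Given (ii), part (i) follows from the long exact sequence of the pair $(X, Y)$: vanishing of $E_T^*(X)$ and $E_T^*(Y)$ in odd degrees combined with surjectivity of $E_T^*(X) \to E_T^*(Y)$ forces $E_T^{2j+1}(X, Y) = 0$ and produces short exact sequences $0 \to E_T^{2j}(X, Y) \to E_T^{2j}(X) \to E_T^{2j}(Y) \to 0$ that split since $E_T^*(Y)$ is free, thereby exhibiting $E_T^*(X, Y)$ as a direct summand of the free $E_T^*(\text{pt})$-module $E_T^*(X)$ and hence itself free of finite rank. The main obstacle I anticipate is the inductive lifting step when $\beta_k \in Y^T$: under Thom isomorphism identifications the map $\phi$ becomes multiplication by an equivariant Euler class and need not be surjective, so the diagram chase cannot invoke the five-lemma directly and must exploit the freedom to adjust $\tilde\eta_{k-1}$ within its coset so that the residual defect in $E_T^*(Y_k, Y_{k-1})$ lands in the image of $\phi$.
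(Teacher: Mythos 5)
Your proof runs in the opposite logical direction from the paper's ((ii) first, then (i)), and the step on which everything rests --- the inductive lifting when $\beta_k\in Y^T$ --- is exactly where the argument breaks down, as you half-acknowledge. Under the Thom isomorphisms the left-hand vertical map $E_T^*(X_{J_k},X_{J_{k-1}})\to E_T^*(Y_k,Y_{k-1})$ of your diagram is multiplication by the equivariant Euler class of the excess normal bundle $(T_{\beta_k}X)^-/(T_{\beta_k}Y)^-$, so its cokernel is $E_T^*(\text{pt})/\langle e_T(\nu)\rangle\neq 0$ whenever $Y$ has positive codimension at $\beta_k$. The snake lemma then says that surjectivity of $r_k$ requires the connecting map $\ker(r_{k-1})\to\operatorname{coker}(\phi)$ to be surjective, i.e.\ that every class in $E_T^*(Y_k,Y_{k-1})$ can, modulo $\langle e_T(\nu)\rangle$, be realized by restricting a lift of some element of $\ker(r_{k-1})$. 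You offer no control over $\ker(r_{k-1})$ or over how its lifts restrict to the new relative term, and no mechanism is visible for producing it; ``adjust $\tilde\eta_{k-1}$ within its coset'' names the problem rather than solving it. The paper sidesteps this entirely with a geometric observation you do not use: setting $Z:=Y\cup\bigcup_{w\in Y^T}X_w$, the inclusion $Y\hookrightarrow Z$ is an acyclic cofibration (each $X_w$ with $w\in Y^T$ equivariantly retracts onto $Y_w$, being an inclusion of linear $T$-representations, and these retractions are assembled by pushouts), so $E_T^*(X,Y)\cong E_T^*(X,Z)$ and the pair $(X,Z)$ is built by attaching only the cells $X_w$ with $w\in X^T\setminus Y^T$, each contributing a representation sphere $S^{V_w}$. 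That proves (i) directly, and (ii) then falls out of the long exact sequence since everything in sight vanishes in odd degrees.

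There is also a secondary gap in your derivation of (i) from (ii): the split short exact sequences exhibit $E_T^*(X,Y)$ only as a direct summand of the free module $E_T^*(X)$, hence as a (stably free) projective $E_T^*(\text{pt})$-module, not as a free one. For $H_T^*(\text{pt})\cong\Sym_{\mathbb Z}(X^*(T))$ and $K_T^*(\text{pt})\cong R(T)$ one can invoke Quillen--Suslin/Swan-type theorems to upgrade projective to free, but for $MU_T^*(\text{pt})$ no such result is available, so the claim ``hence itself free of finite rank'' is unjustified in the generality needed. The paper avoids this by constructing $E_T^*(X,Z)$ inductively as an iterated extension in which each quotient is the free rank-one module $\tilde E_T^*(S^{V_w})$, so that every extension splits and freeness is obtained honestly at each stage.
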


\begin{proof}
To prove (i), we will appeal to some general properties of model categories. \note{We might include an introductory reference.} Indeed, $T$-spaces form a model category in which the weak equivalences are the $T$-homotopy equivalences and the cofibrations are the morphisms with the $T$-homotopy extension property. Accordingly, we will begin by proving the following claim by induction: If $w_1,\ldots,w_n\in Y^T$ {and $X_{w_i}$ are the associated Bia\l ynicki-Birula strata}, then the inclusion
$$Y\rightarrow Y\cup\bigcup_{i=1}^nX_{w_i}$$
is an acyclic cofibration (ie. a cofibration that is also a weak equivalence). \note{This requires a reference. At the moment, the only reference is Peter May's response on Math Overflow.}

For the base case, let $Y_{w_1}\subseteq Y$ denote the Bia{\l}ynicki-Birula stratum of $Y$ associated with $w_1\in Y^T$. One has the pushout square
$$\xymatrix{
Y_{w_1} \ar[d] \ar[r] & X_{w_1} \ar[d] \\
Y \ar[r] & Y\cup X_{w_1}}$$
of inclusions. Note that $Y_{w_1}\rightarrow X_{w_1}$ is an acyclic cofibration. \note{This corresponds to an inclusion of finite-dimensional complex $T$-representations. Therefore, $(Y_w,X_w)$ is a $T$-NDR pair and $Y_w\rightarrow X_w$ is a $T$-cofibration. A reference is page 504 of "On $G$-ANR's and their $G$-Homotopy Types" by Murayama.}
Since the pushout of an acyclic cofibration is itself an acyclic cofibration, it follows that $Y\rightarrow Y\cup X_{w_1}$ is an acyclic cofibration. Now, assume that our claim holds for $\leq n$ points in $Y^T$. Given $w_1,\ldots,w_{n+1}\in Y^T$, we consider the pushout square $$\xymatrix{
Y \ar[d]^{i_2} \ar[r]^{i_1} & Y\cup\bigcup_{i=1}^nX_{w_i} \ar[d]^{j_2} \\
Y\cup X_{w_{n+1}} \ar[r]^{j_1} & Y\cup\bigcup_{i=1}^{n+1}X_{w_i}}$$ of inclusions. Noting that $i_1$ is an acyclic cofibration, the same is true of $j_1$. The inclusion $Y\rightarrow Y\cup\bigcup_{i=1}^{n+1}X_{w_i}$ is then a composition of the acyclic cofibrations $i_2$ and $j_1$, and so is itself an acyclic cofibration. This completes the induction. Setting $$Z:=\bigcup_{w\in Y^T}X_{w},$$ it follows that $Y\rightarrow Z$ is an acyclic cofibration. In particular, $E_T^*(Z,Y)=0$, and it just remains to prove that $E_T^*(X,Z)$ is free of finite rank and vanishes in odd degrees. \note{This would seem to follow from the G-Whitehead Theorem. A weak equivalence of G-CW complexes is a G-homotopy equivalence. Actually, we need not invoke this since our model-theoretic notion of a weak equivalence is precisely a $T$-homotopy equivalence.}

Recall that if $w\in X^T$, then $X_w$ is $T$-equivariantly homeomorphic to a finite-dimensional complex $T$-representation $V_w$. Choose an enumeration $\{w_1,\ldots,w_m\}$ of $X^T\setminus Y^T$ with the property that for all $k\in\{1,\ldots,m\}$, the quotient of $Z\cup\bigcup_{j=1}^kX_{w_j}$ by $Z\cup\bigcup_{j=1}^{k-1}X_{w_j}$ is $T$-equivariantly homeomorphic to the one-point compactification $S^{V_{w_k}}$. \note{Perhaps we must assume $X$ to be connected. Also, we should explain why this is possible.}
Using induction, we will prove that $E_T^*(Z\cup\bigcup_{j=1}^kX_{w_j},Z)$ is free of finite rank for all $k\in\{1,\ldots,m\}$, and that it vanishes in odd grading degrees.

Since $Z\cap X_{w_1}=\emptyset$, the inclusion $Z\rightarrow Z\cup X_{w_1}$ is a cofibration. \note{To see this, create the pushout square with $\emptyset$ in the upper left, $X_{w_1}$ and $Z$ on the off diagonal, and $Z\cup X_w$ on the bottom right}
Hence,
$$E_T^*\left(Z\cup X_{w_1},Z\right)\cong\tilde{E}_T^*\left((Z\cup X_{w_1})/Z\right)\cong\tilde{E}_T^*\left(S^{V_{w_k}}\right)$$
is free of finite rank, and vanishes in odd grading degrees. Now, assume that $E_T^*\left(Z\cup\bigcup_{j=1}^k X_{w_j},Z\right)$ vanishes in odd degrees and is free of finite rank. Since the inclusion $Z\cup\bigcup_{j=1}^kX_{w_j}\rightarrow Z\cup\bigcup_{j=1}^{k+1}X_{w_j}$ is a cofibration, we find that \note{For the next few lines, either the encompassing brackets need to be made bigger, or we must not use bigcup}
$$E_T^*\left(Z\cup\bigcup_{j=1}^{k+1}X_{w_j},Z\cup\bigcup_{j=1}^kX_{w_j}\right)\cong\tilde{E}_T^*\left(\bigg(Z\cup\bigcup_{j=1}^{k+1}X_{w_j}\bigg)\big/\bigg(Z\cup\bigcup_{j=1}^kX_{w_j}\bigg)\right)\cong\tilde{E}_T^*\left(S^{V_{w_{k+1}}}\right)$$
is also free of finite rank and vanishes in odd degrees. \note{Consider the pushout square with the empty set in the upper-left, $X_{w_{k+1}}$ and $Z\cup\bigcup_{j=1}^kX_{w_j}$ on the off-diagonal, and the union in the lower-right.}
Therefore, the long exact sequence of the pairs $(Z\cup\bigcup_{j=1}^{k+1}X_{w_j},Z\cup\bigcup_{j=1}^{k}X_{w_j})$, $(Z\cup\bigcup_{j=1}^{k+1}X_{w_j}, Z)$, $(Z\cup\bigcup_{j=1}^{k}X_{w_j}, Z)$ splits to give the short exact sequence
$$0\rightarrow E_T^*\left(Z\cup\bigcup_{j=1}^{k+1}X_{w_j},Z\cup\bigcup_{j=1}^{k}X_{w_j}\right)\rightarrow E_T^*\left(Z\cup\bigcup_{j=1}^{k+1}X_{w_j},Z\right)\rightarrow E_T^*\left(Z\cup\bigcup_{j=1}^{k}X_{w_j},Z\right)\rightarrow 0.$$
Since $E_T^*(Z\cup\bigcup_{j=1}^{k+1}X_{w_j},Z\cup\bigcup_{j=1}^{k}X_{w_j})$ and $E_T^*(Z\cup\bigcup_{j=1}^{k}X_{w_j},Z)$ are free of finite rank, the same is true of $E_T^*(Z\cup\bigcup_{j=1}^{k+1}X_{w_j},Z)$. We have therefore proved (i).

For (ii), we consider the long exact sequence of the pair $(X,Y)$. Indeed, (i) is then seen to imply that $E_T^n(X)\rightarrow E_T^n(Y)$ is surjective for even $n$. Furthermore, the isomorphism \eqref{eq:Graded Isomorphism} establishes that both $E_T^*(X)$ and $E_T^*(Y)$ vanish in odd grading degrees. The proof is therefore complete.

\end{proof}


\subsection{Direct Limits of Projective Varieties} 
\label{sub:Direct Limits of Projective Varieties}

We now provide a generalization of our findings in Section \ref{sub:Projective Variety Example}, replacing projective varieties with direct limits thereof. As before, $T$ denotes a compact torus with complexification $T_{\mathbb{C}}$,
 and $E_T^*$ is one of $H_T^*$, $K_T^*$, and $MU_T^*$. Suppose that $$X_0\subseteq X_1\subseteq X_2\subseteq\ldots\subseteq X_n\subseteq\ldots$$ is a sequence of equivariant closed embeddings of smooth complex projective $T_{\mathbb{C}}$-varieties with $(X_n)^T$ finite for each $n\geq 0$. Let $X$ be the topological direct limit of the $X_n$ in their analytic topologies, and endow $X$ with the induced direct limit topology. Note that $X$ then carries a continuous action of $T$. The following theorem then generalizes Theorem \ref{thm:Module Structure}:

\begin{theorem}\label{thm:Direct Limit}
Under the conditions stated above, there is an $E_T^*(\text{pt})$-module isomorphism $$E_T^*(X)\cong\prod_{x\in X^T}E_T^*(\text{pt}).$$
\end{theorem}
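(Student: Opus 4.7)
The plan is to translate the problem from $X$ to the tower $\{X_n\}$ via the Milnor $\lim^1$ sequence, collapse it using the Mittag-Leffler condition, and then identify the resulting inverse limit with the desired product.

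First, since each closed embedding $X_n \hookrightarrow X_{n+1}$ is an algebraic closed immersion of smooth $T_{\mathbb{C}}$-varieties, the pair $(X_{n+1}, X_n)$ is a $T$-NDR pair and the inclusion is a $T$-equivariant cofibration. Standard arguments for generalized cohomology theories represented by ring spectra then yield Milnor's short exact sequence
$$0 \to {\lim_n}^1 E_T^{*-1}(X_n) \to E_T^*(X) \to \lim_n E_T^*(X_n) \to 0.$$
Proposition \ref{prop:Relative}(ii) guarantees that each restriction $E_T^*(X_{n+1}) \to E_T^*(X_n)$ is surjective, so the system $\{E_T^*(X_n)\}$ satisfies the Mittag-Leffler condition and the $\lim^1$ term vanishes. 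Hence $E_T^*(X) \cong \lim_n E_T^*(X_n)$.

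Next, I would identify this inverse limit with $\prod_{x\in X^T} E_T^*(\text{pt})$. Proposition \ref{prop:Relative}(i) provides the free $E_T^*(\text{pt})$-module $E_T^*(X_{n+1}, X_n)$, which (using vanishing in odd degrees) fits into a short exact sequence
$$0 \to E_T^*(X_{n+1}, X_n) \to E_T^*(X_{n+1}) \to E_T^*(X_n) \to 0.$$
This sequence splits because $E_T^*(X_n)$ is free by Theorem \ref{thm:Module Structure}. Splicing the splittings recursively, I obtain compatible decompositions
$$E_T^*(X_n) \cong \bigoplus_{x \in X_n^T} E_T^*(\text{pt})$$
in which the restriction $E_T^*(X_{n+1}) \to E_T^*(X_n)$ is a coordinate projection onto the summands indexed by $X_n^T \subseteq X_{n+1}^T$. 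Since $X^T = \bigcup_n X_n^T$ and an element of $\lim_n \bigoplus_{x \in X_n^T} E_T^*(\text{pt})$ is a coherent assignment of a (not necessarily finitely supported) value to each $x \in X^T$, the inverse limit is exactly $\prod_{x \in X^T} E_T^*(\text{pt})$, which finishes the argument.

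The step I expect to be the principal obstacle is producing these compatible fixed-point decompositions. Theorem \ref{thm:Module Structure} only provides an abstract isomorphism $E_T^*(X_n) \cong \bigoplus_{x \in X_n^T} E_T^*(\text{pt})$ coming from a choice of Bia{\l}ynicki-Birula stratification, and a priori these need not intertwine with the tower's restriction maps. The remedy is to define the decompositions recursively by iterating the splittings above, checking at each step that the complementary summand $E_T^*(X_{n+1}, X_n)$ matches the contribution of the new fixed points $X_{n+1}^T \setminus X_n^T$ (its rank equals $|X_{n+1}^T| - |X_n^T|$, which is at least consistent). Once this bookkeeping is in place, the remainder of the argument is formal.
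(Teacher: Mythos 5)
Your proposal is correct and follows essentially the same route as the paper: vanishing of $\varprojlim^1$ via the surjectivity in Proposition \ref{prop:Relative}(ii), followed by an inductive construction of compatible splittings $E_T^*(X_{n+1})\cong E_T^*(X_n)\oplus E_T^*(X_{n+1},X_n)$ identifying the tower with the system of coordinate projections, whose inverse limit is the product over $X^T$. The subtlety you flag — that the abstract isomorphisms from Theorem \ref{thm:Module Structure} must be replaced by recursively defined ones commuting with restriction — is exactly the point the paper's proof addresses with its inductively constructed maps $\psi_n$.
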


\begin{proof}
By Proposition \ref{prop:Relative}, each restriction map $E_T^*(X_{n+1})\rightarrow E_T^*(X_n)$ is surjective. Hence, the inverse system $\{E_T^*(X_n)\}_n$ of $E_T^*(\text{pt})$-modules has vanishing Milnor $\varprojlim^1$. It follows that the canonical map $E_T^*(X)\rightarrow\varprojlim_nE_T^*(X_n)$ is an isomorphism\cite{Axiomatic}.

 It will therefore suffice to prove that $\{E_T^*(X_n)\}_n$ and $\{\bigoplus_{x\in (X_n)^T}E_T^*(\text{pt})\}_n$ are isomorphic as inverse systems of $E_T^*(\text{pt})$-modules, where the maps in the latter system are precisely the projection maps resulting from the inclusions $(X_n)^T\subseteq (X_{n+1})^T$. We will do this by inductively constructing $E_T^*(\text{pt})$-module isomorphisms $$\psi_n:E_T^*(X_n)\rightarrow\bigoplus_{x\in (X_n)^T}E_T^*(\text{pt})$$ making the diagrams
$$D_n:=\xymatrix{
        E_T^*(X_{n+1}) \ar[d] \ar[r]^-{\psi_{n+1}} & \bigoplus_{x\in (X_{n+1})^T}E_T^*(\text{pt}) \ar[d]\\
        E_T^*(X_n) \ar[r]^-{\psi_{n}} & \bigoplus_{x\in (X_n)^T}E_T^*(\text{pt})}$$
commute.

By Theorem \ref{thm:Module Structure}, we haves an $E_T^*(\text{pt})$-module isomorphism $\psi_0:E_T^*(X_0)\rightarrow\bigoplus_{x\in (X_0)^T}E_T^*(\text{pt})$.
Assume now that we have constructed isomorphisms $\psi_k:E_T^*(X_k)\rightarrow\bigoplus_{x\in (X_k)^T}E_T^*(\text{pt})$ for all $k\leq n$ so that the diagrams $D_0,\ldots,D_{n-1}$ commute. Since the restriction $\pi_n:E_T^*(X_{n+1})\rightarrow E_T^*(X_n)$ is surjective, the long exact sequence of the pair $(X_{n+1},X_n)$ degenerates to a short exact sequence
\begin{equation}
\label{eq:Short Exact Sequence}
0\rightarrow E_T^*(X_{n+1},X_n)\rightarrow E_T^*(X_{n+1})\xrightarrow{\pi_n} E_T^*(X_n)\rightarrow 0
\end{equation}
of $E_T^*(\text{pt})$-modules. Theorem \ref{thm:Module Structure} implies that $E_T^*(X_n)$ is free, so that \eqref{eq:Short Exact Sequence} admits a splitting $\varphi_n:E_T^*(X_{n+1})\rightarrow E_T^*(X_{n+1},X_n)$. Also, Proposition \ref{prop:Relative} implies that $E_T^*(X_{n+1},X_n)$ is free of rank $\vert (X_{n+1})^T\setminus (X_n)^T\vert$. We may therefore choose an $E_T^*(\text{pt})$-module isomorphism
$$\theta_n:E_T^*(X_{n+1},X_n)\xrightarrow{\cong}\bigoplus_{x\in (X_{n+1})^T\setminus (X_n)^T}E_T^*(\text{pt}).$$
The composite map
$$E_T^*(X_{n+1})\xrightarrow{(\pi_n,\varphi_n)} E_T^*(X_n)\oplus E_T^*(X_{n+1},X_n)\xrightarrow{\psi_n\oplus\theta_n}\bigoplus_{x\in (X_{n+1})^T}E_T^*(\text{pt})$$
is then an $E_T^*(\text{pt})$-module isomorphism that we shall call $\psi_{n+1}$. By construction, $D_n$ commutes for this choice of $\psi_{n+1}$, and this completes the proof.
\end{proof}

\note{I have moved the first two sections on the affine Grassmannian so that they now follow Sections 1 and 2.}

\section{The Affine Grassmannian} 
\label{sec: Affine Grassmannian}
	The affine Grassmannian $\mathcal Gr$ is a space of great interest to geometric representation theorists (see \cite{Kamnitzer,MV}, for instance). It is also very closely linked to the study of (algebraic) based loop groups (discussed in \cite{Mitchell,Magyar,PressleySegal}). Using the work done in the aforementioned papers, we can show that $\mathcal Gr$ is the perfect candidate for an application of Theorem \ref{thm:Direct Limit}.

\subsection{Definition and Filtration} 
\label{sub:Definition and Filtration}

Let $G$ be a connected, simply-connected complex semisimple group. Fix a maximal torus $T_{\mathbb{C}}\subseteq G$ with compact real from $T_{\mathbb R}$, as well as a Borel subgroup $B$ containing $T_{\mathbb C}$. Take $W= N_G(T_{\mathbb C})/T_{\mathbb C}$ to be the associated Weyl group.

Let $X^*(T_{\mathbb{C}}):=\Hom(T_{\mathbb{C}},\mathbb{C}^*)$ and $X_*(T_{\mathbb{C}}):=\Hom(\mathbb{C}^*,T_{\mathbb C})$ be the weight and coweight lattices respectively, endowed with their usual pairing
$$\langle\cdot,\cdot\rangle:X_*(T_{\mathbb{C}})\otimes_{\mathbb{Z}}X^*(T_{\mathbb{C}})\rightarrow\mathbb{Z}.$$
The choice of Borel subgroup yields dominant weights $X^*(T_{\mathbb{C}})_+\subseteq X^*(T_{\mathbb{C}})$ and dominant coweights $X_*(T_{\mathbb{C}})_+\subseteq X_*(T_{\mathbb{C}})$. Take $\Delta\subseteq X^*(T_{\mathbb{C}})$ to be the collection of roots, and $\Pi\subseteq\Delta$ to be the subset of simple (positive) roots.

We shall assume that $G$ admits a finite-dimensional, faithful, irreducible representation $V(\alpha)$ of highest weight $\alpha\in X^*(T_{\mathbb{C}})_+$. This allows us to realize $G$ as a Zariski-closed subgroup of $GL(V(\alpha))$.

Consider the $\mathbb{C}$-algebras $\mathcal{O}:=\mathbb{C}[t]$ and $\mathcal{K}:=\mathbb{C}[t,t^{-1}]$, letting $G(\mathcal{O})$ and $G(\mathcal{K})$ denote the $\mathcal{O}$ and $\mathcal{K}$-valued points of $G$, respectively. Set-theoretically, the affine Grassmannian of $G$ is defined to be the coset space
$$\mathcal{G}r:=G(\mathcal{K})/G(\mathcal{O}).$$

Note that the $\mathbb{C}$-vector space $V(\alpha)\otimes\mathcal{K}$ admits the filtration
$$\ldots\subseteq V(\alpha)\otimes t^2\mathcal{O}\subseteq V(\alpha)\otimes t\mathcal{O}\subseteq V(\alpha)\otimes\mathcal{O}\subseteq V(\alpha)\otimes t^{-1}\mathcal{O}\subseteq V(\alpha)\otimes t^{-2}\mathcal{O}\subseteq\ldots.$$
We thus define a function $\text{val}:V(\alpha)\otimes\mathcal{K}\rightarrow\mathbb{Z}$ by
$$\text{val}(u):=\max\{k\in\mathbb{Z}:u\in V(\alpha)\otimes t^{k}\mathcal{O}\}.$$
As $G(\mathcal{K})$ acts on $V(\alpha)\otimes\mathcal{K}$ by virtue of the inclusion of $G$ into $\GL(V(\alpha))$, we may define $\text{Val}:G(\mathcal{K})\rightarrow\mathbb{Z}$ by
$$\text{Val}(g):=\min\{\text{val}(g\cdot v):v\in V(\alpha)\}.$$

Given $n\in\mathbb{Z}_{\geq 0}$, we set
$$G(\mathcal{K})_n:=\{g\in G(\mathcal{K}):\text{Val}(g)\geq -n\},$$
yielding a filtration
\begin{equation}
\label{eq:AffGeoFilt}
G(\mathcal{O})=G(\mathcal{K})_0\subseteq G(\mathcal{K})_1\subseteq G(\mathcal{K})_2\subseteq\ldots\subseteq G(\mathcal{K})
\end{equation}
of $G(\mathcal{K})$. Note that $G(\mathcal{K})_n$ is invariant under the right-multiplicative action of $G(\mathcal{O})$ on $G(\mathcal{K})$. Accordingly, we define
\begin{equation}
\label{eq:GrFilt}
\mathcal{G}r_n:=G(\mathcal{K})_n/G(\mathcal{O}),
\end{equation}
a smooth finite-dimensional projective scheme over $\mathbb{C}$. By exhibiting the affine Grassmannian as inductive limit of the schemes $\{\mathcal{G}r_n\}_{n\in\mathbb{Z}_{\geq 0}}$, we may realize $\mathcal{G}r$ as a projective ind-scheme. (For a treatment of ind-schemes, the reader might consult the appendix of \cite{FB}.)

Of course, we will endow $\mathcal{G}r$ with a topology other than the one it inherits as an ind-scheme. Namely, we will regard $\mathcal{G}r$ as the direct limit of the varieties $\{\mathcal{G}r_n\}_{n\in\mathbb{Z}_{\geq 0}}$ in their classical topologies.


\subsection{The Action of $\mathbb{C}^*$} 
\label{sub:Action of C*}

There is a natural ``loop rotation'' action of $\mathbb{C}^*$ on $G(\mathcal{K})$. Indeed, the left-multiplicative action of $\mathbb{C}^*$ on itself yields a $\mathbb{C}^*$-action on $\Hom(\mathbb{C}^*,G)=G(\mathcal{K})$ by group automorphisms. More concretely, the inclusion $G\subseteq\GL(V(\alpha))$ associates to each point $\gamma\in G(\mathcal{K})$ an expansion $\gamma=\sum_{j\in\mathbb{Z}}\gamma_jt^j$, where $\gamma_j\in\End(V(\alpha))$ for all $j$. The action of $s\in\mathbb{C}^*$ on $\gamma$ is then given by $$s:\sum_{j\in\mathbb{Z}}\gamma_jt^j\mapsto\sum_{j\in\mathbb{Z}}\gamma_j(st)^j.$$ It follows that $G(\mathcal{K})_n$ is $\mathbb{C}^*$-invariant for $n\in\mathbb{Z}_{\geq 0}$. In particular, $\mathcal{G}r_0=G(\mathcal{O})$ is $\mathbb{C}^*$-invariant and the $\mathbb{C}^*$-action descends to an action on $\mathcal{G}r$ that preserves each subvariety $\mathcal{G}r_n$.


\subsection{The Generalized Torus-Equivariant Cohomology of $\mathcal{G}r$} 
\label{sub:A Stratification}

Consider the compact torus $T:=T_{\mathbb{R}}\times S^1$, where $T_{\mathbb{R}}$ is the compact torus fixed in Section \ref{sub:Definition and Filtration}. As $T$ is a subgroup of $T_{\mathbb{C}}\times\mathbb{C}^*$, and the latter torus acts on $\mathcal{G}r$ via the commuting actions of $T_{\mathbb{C}}$ and $\mathbb{C}^*$, we obtain an action of $T$ on $\mathcal{G}r$.

Note that \eqref{eq:GrFilt} is thus a $T$-equivariant filtration. With Theorem \ref{thm:Direct Limit} in mind, it remains only to prove that $(\mathcal{G}r_n)^T$ is finite for all $n\geq 0$. To this end, let $\lambda\in X_*(T_{\mathbb{C}})$ be a coweight, and consider the point in $G(\mathcal{K})$ given by the composition
\begin{equation}
\label{eq:lambdaPoint}
\mathbb{C}^*\xrightarrow{\lambda} T_{\mathbb{C}}\hookrightarrow G,
\end{equation}
where $T_{\mathbb{C}}\hookrightarrow G$ is the inclusion. Let  $t^{\lambda}\in\mathcal{G}r$ denote the class of \eqref{eq:lambdaPoint} in the affine Grassmannian. It turns out (see \cite{MV}) that the $T$-fixed points of $\mathcal{G}r$ are precisely the $t^{\lambda}$, for $\lambda\in X_*(T_{\mathbb{C}})$, leading us to prove the following lemma:

\begin{lemma}
For $n\geq 0$, $$(\mathcal{G}r_n)^T=\{t^{w\lambda}:w\in W, \text{ }\lambda\in X_*(T_{\mathbb{C}})_+, \langle\lambda,w_0\alpha\rangle\geq -n\},$$ where $w_0\in W$ is the longest element. In particular, $(\mathcal{G}r_n)^T$ is finite.
\end{lemma}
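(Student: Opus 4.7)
The plan is to compute $\text{Val}(t^\lambda)$ explicitly in terms of the pairing $\langle \lambda, \cdot\rangle$ with weights of $V(\alpha)$, use Weyl-invariance to reduce to a dominant representative, and then identify the lowest weight as the minimizer of that pairing. Every $T$-fixed point of $\mathcal{G}r$ has the form $t^\lambda$ with $\lambda \in X_*(T_{\mathbb{C}})$, and each coweight is Weyl-conjugate to a unique dominant coweight. It therefore suffices to show that for $\lambda \in X_*(T_{\mathbb{C}})_+$ and $w \in W$, one has $t^{w\lambda} \in \mathcal{G}r_n$ if and only if $\langle \lambda, w_0\alpha\rangle \geq -n$.

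Decomposing $V(\alpha)$ into $T_{\mathbb{C}}$-weight spaces, the coweight $w\lambda$, viewed as an element of $G(\mathcal{K})$, acts on a weight vector $v_\mu$ by multiplication by $t^{\langle w\lambda, \mu\rangle}$. Taking the minimum over the weights of $V(\alpha)$ yields
\[
\text{Val}(t^{w\lambda}) = \min_\mu \langle w\lambda, \mu\rangle.
\]
Because the weight set of $V(\alpha)$ is $W$-stable, this minimum is unchanged when $w\lambda$ is replaced by $\lambda$, so it is enough to treat dominant $\lambda$. For such $\lambda$, every weight $\mu$ of $V(\alpha)$ satisfies $\mu - w_0\alpha \in \mathbb{Z}_{\geq 0}\Pi$, a standard consequence of $-w_0\alpha$ being the highest weight of the dual module $V(\alpha)^*$; pairing this difference against the dominant $\lambda$ gives $\langle \lambda, \mu\rangle \geq \langle \lambda, w_0\alpha\rangle$. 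Hence the minimum is attained at the lowest weight $w_0\alpha$, establishing $\text{Val}(t^\lambda) = \langle \lambda, w_0\alpha\rangle$ and with it the desired characterization.

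For finiteness, I would expand a dominant coweight as $\lambda = \sum_i c_i \omega_i^\vee$ with $c_i \geq 0$ in the basis of fundamental coweights and rewrite the condition as $\sum_i c_i \langle \omega_i^\vee, -w_0\alpha\rangle \leq n$. Faithfulness of $V(\alpha)$ forces the highest weight $\alpha$ to be nonzero on each simple factor of $G$, so $-w_0\alpha$ is a nonzero dominant weight; combined with the fact that on each connected component of the Dynkin diagram the inverse Cartan matrix has strictly positive entries, each coefficient $\langle \omega_i^\vee, -w_0\alpha\rangle$ is strictly positive. The constraint therefore cuts out a bounded simplex in the dominant chamber, whose intersection with the lattice $X_*(T_{\mathbb{C}})$ is finite; multiplying by $|W|$ delivers finiteness of $(\mathcal{G}r_n)^T$. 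The main substantive step is the weight-theoretic identification of $w_0\alpha$ as the minimizer in the second paragraph; the finiteness bound is then a routine consequence of positivity of the inverse Cartan matrix.
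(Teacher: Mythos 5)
Your proof is correct and follows essentially the same route as the paper: reduce to dominant $\lambda$ via Weyl-invariance, compute $\mathrm{Val}(t^{\lambda})$ as the minimum of $\langle\lambda,\xi\rangle$ over the weights $\xi$ of $V(\alpha)$, and identify that minimum with $\langle\lambda,w_0\alpha\rangle$. You go slightly further than the paper in two harmless ways: you justify why the lowest weight $w_0\alpha$ realizes the minimum (via $\mu-w_0\alpha\in\mathbb{Z}_{\geq 0}\Pi$), and you spell out the finiteness claim using positivity of the inverse Cartan matrix, a point the paper leaves implicit.
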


\begin{proof}
Since $\mathcal{G}r_n$ is $G$-invariant, one has an induced action of $W$ on $(\mathcal{G}r_n)^{T_{\mathbb{C}}}$. Because the actions of $G$ and $\mathbb{C}^*$ commute, the $W$-action leaves $(\mathcal{G}r_n)^{T_{\mathbb{C}}\times\mathbb{C}^*}=(\mathcal{G}r_n)^T$ invariant. Hence, if $\mu\in X_*(T_{\mathbb{C}})$ is $W$-conjugate to $\lambda\in X_*(T_{\mathbb{C}})_+$, then $t^{\mu}\in(\mathcal{G}r_n)^T$ if and only if $t^{\lambda}\in(\mathcal{G}r_n)^T$. Our task is therefore to prove that if $\lambda\in X_*(T_{\mathbb{C}})_+$, then $t^{\lambda}\in(\mathcal{G}r_n)^T$ if and only if $\langle\lambda,w_0\alpha\rangle\geq -n$.

Suppose that $\lambda\in X_*(T_{\mathbb{C}})_+$, and let $v\in V(\alpha)$ be a vector of weight $\xi\in X^*(T_{\mathbb{C}})$. Note that for all $t\in\mathbb{C}^*$, $$\lambda(t)\cdot v=\xi(\lambda(t))v=t^{\langle\lambda,\xi\rangle}v.$$ Hence, if we regard $\lambda$ as a point in $G(\mathcal{K})$, then $$\lambda\cdot v=v\otimes t^{\langle\lambda,\xi\rangle}\in V(\alpha)\otimes t^{\langle\lambda,\xi\rangle}\mathcal{O}.$$ Since $V(\alpha)$ has a basis of weight vectors, it follows that $\text{Val}(\lambda)$ is the minimum of $\langle\lambda,\xi\rangle$, where $\xi$ ranges over the weights of $V(\alpha)$. Noting that $w_0\alpha$ is the lowest weight of $V(\alpha)$, we conclude that $\text{Val}(\lambda)=\langle\lambda,w_0\alpha\rangle$. Therefore, $\lambda\in G(\mathcal{K})_n$ if and only if $\langle\lambda,w_0\alpha\rangle\geq -n$. This completes the proof.
\end{proof}

We may thus apply Theorem \ref{thm:Direct Limit} to compute the module structure of $E_T^*(\mathcal{G}r)$ for $E_T^*=H_T^*$, $K_T^*$, or $MU_T^*$. Indeed, we have $$E_T^*(\mathcal{G}r)\cong\prod_{\lambda\in X_*(T_{\mathbb{C}})}E_T^*(\text{pt})$$ as $E_T^*(\text{pt})$-modules.



\bibliographystyle{plain}
\bibliography{GenEqBib}

\begin{thebibliography}{10}

\bibitem{Yang-Mills}
M.~F. Atiyah and R.~Bott.
\newblock The {Yang-Mills} equations over {Riemann} surfaces.
\newblock {\em Philosophical Transactions of the Royal Society of London.
  Series A, Mathematical and Physical Sciences}, 308(1505):pp. 523--615, 1983.

\bibitem{Bialynicki-Birula1973}
A.~Bia{\l}ynicki-Birula.
\newblock Some theorems on actions of algebraic groups.
\newblock {\em Annals of Mathematics}, 98(3):pp. 480--497, 1973.

\bibitem{CGK}
Michael Cole, J.~P.~C. Greenlees, and I.~Kriz.
\newblock The universality of equivariant complex bordism.
\newblock {\em Math. Z.}, 239(3):455--475, 2002.

\bibitem{FB}
Edward Frenkel and David Ben-Zvi.
\newblock {\em Vertex algebras and algebraic curves}, volume~88 of {\em
  Mathematical Surveys and Monographs}.
\newblock American Mathematical Society, Providence, RI, second edition, 2004.

\bibitem{HHH2005}
Megumi Harada, Andr{\'e} Henriques, and Tara~S. Holm.
\newblock Computation of generalized equivariant cohomologies of {K}ac-{M}oody
  flag varieties.
\newblock {\em Adv. Math.}, 197(1):198--221, 2005.

\bibitem{Axiomatic}
Mark Hovey, John~H. Palmieri, and Neil~P. Strickland.
\newblock Axiomatic stable homotopy theory.
\newblock {\em Mem. Amer. Math. Soc.}, 128(610):x+114, 1997.

\bibitem{Kamnitzer}
Joel Kamnitzer.
\newblock Mirkovi\'c-{V}ilonen cycles and polytopes.
\newblock {\em Ann. of Math. (2)}, 171(1):245--294, 2010.

\bibitem{KirwanBook}
Frances~Clare Kirwan.
\newblock {\em Cohomology of quotients in symplectic and algebraic geometry},
  volume~31 of {\em Mathematical Notes}.
\newblock Princeton University Press, Princeton, NJ, 1984.

\bibitem{Magyar}
Peter Magyar.
\newblock Schubert classes of a loop group.

\bibitem{May}
J.~P. May.
\newblock {\em Equivariant homotopy and cohomology theory}, volume~91 of {\em
  CBMS Regional Conference Series in Mathematics}.
\newblock Published for the Conference Board of the Mathematical Sciences,
  Washington, DC; by the American Mathematical Society, Providence, RI, 1996.
\newblock With contributions by M. Cole, G. Comeza{\~n}a, S. Costenoble, A. D.
  Elmendorf, J. P. C. Greenlees, L. G. Lewis, Jr., R. J. Piacenza, G.
  Triantafillou, and S. Waner.

\bibitem{MV}
I.~Mirkovi{\'c} and K.~Vilonen.
\newblock Geometric {L}anglands duality and representations of algebraic groups
  over commutative rings.
\newblock {\em Ann. of Math. (2)}, 166(1):95--143, 2007.

\bibitem{Mitchell}
Stephen~A. Mitchell.
\newblock Quillen's theorem on buildings and the loops on a symmetric space.
\newblock {\em Enseign. Math. (2)}, 34(1-2):123--166, 1988.

\bibitem{PressleySegal}
Andrew Pressley and Graeme Segal.
\newblock {\em Loop groups}.
\newblock Oxford Mathematical Monographs. The Clarendon Press, Oxford
  University Press, New York, 1986.
\newblock Oxford Science Publications.

\bibitem{Segal}
Graeme Segal.
\newblock Equivariant {$K$}-theory.
\newblock {\em Inst. Hautes \'Etudes Sci. Publ. Math.}, (34):129--151, 1968.

\bibitem{Sinha}
Dev~P. Sinha.
\newblock Computations of complex equivariant bordism rings.
\newblock {\em Amer. J. Math.}, 123(4):577--605, 2001.

\end{thebibliography}

\end{document}